\theoremstyle{plain}
\newtheorem{theorem}{\bf Theorem}
\newtheorem{lemma}[theorem]{\bf Lemma}
\newtheorem{corollary}[theorem]{\bf Corollary}
\theoremstyle{definition}
\newtheorem{definition}[theorem]{\bf Definition}
\newtheorem*{acknowledge} {Acknowledgement}
\newcommand{\EE}{\mathbb E}
\newcommand{\und}{\;\mbox{ and }\;}
\newcommand{\bdot}{\boldsymbol{\cdot}}
\def\({\left(}
\def\){\right)}
\def\cH{\mathcal{H}}
\begin{document}

\onehalfspace

\title[zero-sum free sequences contained in random subsets]{On zero-sum free sequences contained in random subsets of finite cyclic groups}

\author[S.~J.~Lee]{Sang June Lee}
\address{Department of Mathematics \\ Kyung Hee University \\ Seoul 02447 \\ South Korea} \email{sjlee242@khu.ac.kr, sjlee242@gmail.com}

\author[J.~S.~Oh]{Jun Seok Oh}
\address{Research Institute of Basic Sciences \\ Incheon National University \\ Incheon 22012  \\ South Korea} \email{junseok1.oh@gmail.com}

\thanks{The first author was supported by Basic Science Research Program
through the National Research Foundation of Korea (NRF) funded by
the Ministry of Education (NRF-2019R1F1A1058860).
The second author is the corresponding author. This
work was partially done while the second author was visiting Kyung Hee University in South Korea.}

\subjclass[2010]{11B50, 11B30, 05D40}
\keywords{zero-sum free sequence, cyclic group, integer partition, Young diagram, hypergraph, Kim--Vu polynomial concentration}
\date{\today}

\begin{abstract}
Let $C_n$ be a cyclic group of order $n$. A {\it sequence} $S$ of length $\ell$ over $C_n$ is a sequence $S = a_1\bdot a_2\bdot \ldots\bdot a_{\ell}$ of $\ell$ elements in  $C_n$, where a repetition of elements is allowed and their order is disregarded. We say that $S$ is a zero-sum sequence if $\Sigma_{i=1}^{\ell} a_i = 0$ and that $S$ is a zero-sum free sequence if $S$ contains no zero-sum subsequence.

Let $R$ be a random subset of $C_n$ obtained by choosing each element in $C_n$ independently with probability $p$.
Let $N^R_{n-1-k}$ be the number of zero-sum free sequences of length $n-1-k$ in $R$. Also, let $N^R_{n-1-k,d}$ be the number of zero-sum free sequences of length $n-1-k$ having $d$ distinct elements in $R$. We obtain the expectation of $N^R_{n-1-k}$ and $N^R_{n-1-k,d}$ for $0\leq k\leq \left\lfloor n/3\right\rfloor$. We also show a concentration result on $N^R_{n-1-k}$ and $N^R_{n-1-k,d}$ when $k$ is fixed.
\end{abstract}

\maketitle

\section{Introduction}\label{sec:intro}

Let $C_n$ be a cyclic group of order $n$. A {\it sequence} $S$ of length $\ell$ over $C_n$ is a sequence $$S = a_1\bdot a_2\bdot \ldots\bdot a_{\ell}$$ of $\ell$ elements in  $C_n$, where a repetition of elements is allowed and their order is disregarded. 
We say that a sequence $S$ over $C_n$ is contained in $A\subset C_n$ if each element in $S$ is contained in $A$. For $a \in C_n$, let
\[
  \mathsf v_a (S)  =  \left| \{ i \in [1,\ell] \mid a_i = a \} \right|
\]
be the {\it multiplicity} of $a$ in $S$.
A {\it subsequence} $T$ of $S$ is a sequence over $C_n$ satisfying $\mathsf v_a (T) \le \mathsf v_a (S)$ for all $a \in C_n$.
We say that $S$ is a {\it zero-sum sequence} if $a_1 + a_2 + \ldots + a_{\ell} = 0$. A sequence is called {\it zero-sum free} if it contains no  zero-sum subsequence.

An initial study of zero-sum sequences dates back to 1961 when Erd{\H{o}}s, Ginzburg, and Ziv \cite{EGZ_1961} proved that $2n-1$ is the smallest positive integer $\ell$ such that every sequence of length $\ell$ over $C_n$ has a zero-sum subsequence of length $n$. Since that time, zero-sum sequences over a finite group have actively studied in additive combinatorics. For more details, see a survey paper by Gao and Geroldinger~\cite{Ga-Ge_2006}. Although earlier works often focused on finite abelian groups, an application to factorization theory and invariant theory pushed the object forward to non-abelian groups. The reader can refer to Geroldinger, Grynkiewicz, Zhong, and the second author~\cite{GGOZ_2019} for recent progress with respect to factorization theory and to Cziszter, Domokos, and Sz\"oll{\H{o}}si~\cite{CD_2014, CDS_2018} for connection with invariant theory.

In this paper, we focus on zero-sum free sequences over a cyclic group. Well-known problems about zero-sum free sequences over a finite group are to determine the maximum length of zero-sum free sequences, which is a combinatorial group invariant known as the {\it Davenport constant}, and to characterize the structure of zero-sum free sequences.
Observe that the maximum length of all zero-sum free sequences over $C_n$ is $n-1$. Also, we have that $S$ is a zero-sum free sequence of length $n-1$ over $C_n$ if and only if $$S = \underset{n-1}{\underbrace{g \bdot g \bdot \ldots \bdot g}}$$ for a generator $g \in C_n$. Gao~\cite{Gao_2000} proved the following result on the structure of long zero-sum free sequences over $C_n$.

\begin{theorem}[Theorem 4.3 in~\cite{Ga-Ge_2006}, Lemma 2.5 in~\cite{Gao_2000}]
\label{thm:Gao}~
Let $n \ge 2$ and $0\leq k\leq \left\lfloor \frac{n}{3} \right\rfloor$. Then $S$ is a zero-sum free sequence of length $n-1-k$ over $C_n$ if and only if
\[
  S  =   \underset{n-1-2k}{\underbrace{g \bdot \ g \bdot\ \ldots \bdot \ g}} \bdot\ (x_1 g) \bdot\  (x_2 g) \bdot\  \ldots  \bdot \ (x_k g) ,
\]
where $g$ is a generator of $C_n$ and  $x_1, x_2, \ldots, x_{k}$ are positive integers such that
\begin{equation} \label{eq:x_i}
1\leq x_1 \le x_2 \le \ldots \le x_{k} \quad \und \quad x_1 + x_2 + \ldots + x_{k} \le 2k.
\end{equation}
\end{theorem}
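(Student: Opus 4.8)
\medskip
\noindent\textbf{Proof proposal.}
My plan is to reduce Theorem~\ref{thm:Gao} to the structure theorem for long zero-sum free sequences over a cyclic group --- this is essentially Lemma~2.5 of~\cite{Gao_2000} itself, and it also follows from the Savchev--Chen structure theorem: if $T$ is a zero-sum free sequence over $C_n$ with $|T|\ge\lfloor n/2\rfloor+1$, then there exist a generator $g$ of $C_n$ and integers $m_1,\dots,m_{|T|}\in[1,n-1]$ such that $T=(m_1g)\bdot\dots\bdot(m_{|T|}g)$ and $m_1+\dots+m_{|T|}\le n-1$. Granting this input, both directions of the theorem are short combinatorics; the real weight sits in the input, which I comment on at the end.

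For the ``if'' direction I would argue directly. Suppose $S$ has the stated form with $g$ a generator and $1\le x_1\le\dots\le x_k$, $x_1+\dots+x_k\le 2k$ (note $n-1-2k\ge 0$, since $k\le\lfloor n/3\rfloor$). The sum of any nonempty subsequence of $S$ equals $\big(j+\sum_{i\in I}x_i\big)g$ for some $0\le j\le n-1-2k$ and some $I\subseteq[1,k]$ with $(j,I)\neq(0,\varnothing)$, and
\[
1\ \le\ j+\sum_{i\in I}x_i\ \le\ (n-1-2k)+(x_1+\dots+x_k)\ \le\ (n-1-2k)+2k\ =\ n-1 .
\]
Since $g$ has order $n$, this is a nonzero element of $C_n$, so $S$ has no zero-sum subsequence.

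For the ``only if'' direction, let $S$ be zero-sum free with $|S|=n-1-k$ and $0\le k\le\lfloor n/3\rfloor$. One checks that $|S|>\lfloor n/2\rfloor$ except for a handful of pairs $(n,k)$ with $n\le 6$, and those finitely many cases I would dispose of by direct inspection. Otherwise the structure theorem applies: fix a generator $g$ and write $S=(m_1g)\bdot\dots\bdot(m_{n-1-k}g)$ with $1\le m_1\le\dots\le m_{n-1-k}$ and $\sum_i m_i\le n-1$. If $t$ is the number of indices with $m_i\ge 2$, then
\[
n-1\ \ge\ \sum_i m_i\ \ge\ \big((n-1-k)-t\big)+2t\ =\ (n-1-k)+t ,
\]
so $t\le k$; equivalently at least $n-1-2k$ of the $m_i$ equal $1$, i.e.\ $\mathsf v_g(S)\ge n-1-2k$. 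Removing $n-1-2k$ copies of $g$ from $S$ leaves a subsequence of length $k$ whose terms are $(x_1g),\dots,(x_kg)$ with every $x_i\ge 1$; after sorting, $1\le x_1\le\dots\le x_k$ and
\[
x_1+\dots+x_k\ =\ \Big(\sum_i m_i\Big)-(n-1-2k)\ \le\ (n-1)-(n-1-2k)\ =\ 2k ,
\]
which is exactly~\eqref{eq:x_i}. Hence $S$ has the claimed form.

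The hard part is the structure theorem itself, which supplies the two facts the counting rests on: (i) that some single element of $\supp(S)$ is a \emph{generator} of $C_n$, and (ii) the coefficient bound $\sum_i m_i\le n-1$. That $\supp(S)$ \emph{collectively} generates $C_n$ is immediate --- were $\supp(S)$ contained in a proper subgroup $H\le C_n$, then $S$ would be zero-sum free over $H$, forcing $|S|\le|H|-1\le n/2-1$, which contradicts $|S|=n-1-k\ge n-1-\lfloor n/3\rfloor>n/2-1$ --- but promoting this to ``a single generator occurs in $\supp(S)$'', together with establishing the bound in (ii), is the genuine obstacle. In the cyclic setting both are obtained by an induction on $n$ that analyses the element of $S$ of largest multiplicity together with the set of subsequence sums of $S$; in a self-contained write-up I would either quote this as a black box (via~\cite{Gao_2000}) or reproduce that induction, which is where essentially all of the difficulty is concentrated.
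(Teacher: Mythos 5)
Your proposal is correct and matches the paper's treatment: the paper does not prove the ``only if'' direction at all (it is quoted from Gao and Gao--Geroldinger), and for the ``if'' direction it gives exactly the one-line argument you spell out, namely that every nonempty subsequence sums to $\ell g$ with $0<\ell\le n-1$. Your extra step of deriving the ``only if'' direction from the Savchev--Chen structure theorem (the multiplicity count $t\le k$ and the bound $\sum x_i\le 2k$) is correct and slightly more than the paper does, but as you acknowledge, the substantive difficulty still resides in the cited structure theorem, so the two treatments are essentially the same.
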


Theorem~\ref{thm:Gao} was generalized by Savchev and Chen~\cite{SC_2007} on the zero-sum free sequences of length at least $(n+1)/2$ over $C_n$. Theorem~\ref{thm:Gao} and the result by  Savchev and Chen were applied to the number of minimal zero-sum sequences of long length by Ponomarenko~\cite{P_2004} and Cziszter, Domokos, and Geroldinger~\cite{CDG_2016}, respectively.

Remark that Theorem 4.3 in~\cite{Ga-Ge_2006} only gives the statement in Theorem~\ref{thm:Gao} from the left-hand side to the right-hand side. The proof from the right-hand side to the left-hand side is obvious since $g$ is a generator of $C_n$ and all subsequences $T$ of $S$ satisfy
$\sigma(T)=\ell g\neq 0$ for some integer $0<\ell<n$, where $\sigma(T)$ is the sum of all elements in $T$.

In this paper, we are interested in zero-sum free sequences of a given length contained in a random subset of $C_n$.
Investigating how classical extremal results in \emph{dense} environments
transfer to \emph{sparse} settings has become a deep line of
research. For example, Roth's theorem on $3$-term
arithmetic progressions~\cite{roth53} was generalized for random subsets of
integers~\cite{KLRap3}, and there are recent generalizations about various classical extremal results by Schacht~\cite{schacht:_extrem_resul} and
Conlon and Gowers~\cite{conlon:_theor_in_spars_random}.

Let $R$ be a random subset of $C_n$ obtained by choosing each element in $C_n$ independently with probability $p$.
Let $N_{n-1-k}$ be the number of zero-sum free sequences of length $n-1-k$ over $C_n$. Also, let $N^R_{n-1-k}$ be the number of zero-sum free sequences of length $n-1-k$ in $R$. The result on the expectation of $N^R_{n-1-k}$ is as follows.
\begin{theorem} \label{thm:exp2}~
Let $n\geq 2$ and $0\leq k\leq  \left\lfloor \frac{n}{3} \right\rfloor$.  The expected number of zero-sum free sequences of length $n-1-k$ in a random subset $R$ of $C_n$ is
\begin{equation*} \label{eq:Exp100}
\EE\(N^R_{n-1-k}\)=\varphi(n) \left[ p + \sum_{d = 2}^{D} p^{d} \(\sum_{j=\frac{(d-1)d}{2}}^k q(j,d-1)\)\right],
\end{equation*}
where \begin{itemize}\item $D=\left\lfloor \frac{1+\sqrt{1+8k}}{2}\right\rfloor$, \item $\varphi(n)$ denotes the number of generators in $C_n$, and \item $q(j,d-1)$ is the number of partitions of $j$ having $d-1$ distinct parts.\end{itemize} 
\end{theorem}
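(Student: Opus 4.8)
The plan is to combine linearity of expectation with Gao's structure theorem (Theorem~\ref{thm:Gao}). For each $d\ge 1$, let $N_{n-1-k,d}$ be the number of zero-sum free sequences of length $n-1-k$ over $C_n$ whose support has exactly $d$ elements; if $S$ is such a sequence then $S$ is contained in $R$ precisely when its $d$-element support is a subset of $R$, so $\Pr(S\subseteq R)=p^d$. Hence, summing over all zero-sum free sequences $S$ of length $n-1-k$ over $C_n$,
\begin{equation*}
\EE\bigl(N^R_{n-1-k}\bigr)=\sum_S\Pr(S\subseteq R)=\sum_{d\ge 1}p^d\,N_{n-1-k,d},
\end{equation*}
and it suffices to prove that $N_{n-1-k,1}=\varphi(n)$, that $N_{n-1-k,d}=\varphi(n)\sum_{j=(d-1)d/2}^{k}q(j,d-1)$ for $2\le d\le D$, and that $N_{n-1-k,d}=0$ for $d>D$.

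To compute $N_{n-1-k,d}$, the natural move is to use the parametrization supplied by Theorem~\ref{thm:Gao}: to a generator $g$ of $C_n$ and a tuple $\mathbf x=(x_1,\dots,x_k)$ satisfying \eqref{eq:x_i}, associate
\[
S_{g,\mathbf x}=\underbrace{g\bdot\cdots\bdot g}_{n-1-2k}\bdot(x_1g)\bdot\cdots\bdot(x_kg).
\]
By Theorem~\ref{thm:Gao}, $(g,\mathbf x)\mapsto S_{g,\mathbf x}$ maps onto the set of zero-sum free sequences of length $n-1-k$ over $C_n$. The first step --- and, I expect, the main obstacle --- is to show that this map is also injective, i.e.\ that $g$ and $\mathbf x$ can be recovered from $S=S_{g,\mathbf x}$. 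The idea is that $g$ occurs in $S$ with multiplicity at least $n-1-2k$, while any other group element occurs with multiplicity at most $k$ (there are only $k$ coefficients $x_i$). When $n-1-2k>k$ the element $g$ is therefore the unique element of maximal multiplicity in $S$, so it is determined by $S$; and once $g$ is known, $\mathbf x$ is determined as well, because $g$ generates $C_n$ and $x_1,\dots,x_k\in[1,2k]\subseteq[1,n-1]$. The inequality $n-1-2k>k$ fails only when $(n-1)/3\le k\le n/3$, which allows at most one value of $k$ for each $n$; in the remaining cases a short direct calculation rules out a second representation, using that an alternative base $g'=cg$ with $c\ge 2$ would be forced to occur in $S$ at least $n-1-2k$ times, which pins down $\mathbf x$ tightly enough to make the resulting coefficients $x_1',\dots,x_k'$ violate $x_1'+\cdots+x_k'\le 2k$.

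It then remains to enumerate the admissible tuples and to read off the number of distinct elements. Substituting $y_i=x_i-1$ turns \eqref{eq:x_i} into $0\le y_1\le\cdots\le y_k$ with $y_1+\cdots+y_k\le k$; deleting the zero entries, the positive $y_i$ form a partition $\lambda$ of an integer $j$ with $0\le j\le k$, and since such a partition automatically has at most $j\le k$ parts this is a bijection between admissible $\mathbf x$ and pairs $(j,\lambda)$ with $0\le j\le k$ and $\lambda\vdash j$. The support of $S_{g,\mathbf x}$ is $\{g\}\cup\{x_1g,\dots,x_kg\}$, which, as $g$ is a generator and all coefficients lie in $[1,n-1]$, has exactly $1+(\text{number of distinct parts of }\lambda)$ elements (here one uses $n-1-2k\ge 1$, so that $g$ genuinely occurs in $S$; the exceptional pair $n=3$, $k=1$ is treated separately). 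Therefore $N_{n-1-k,d}$ equals $\varphi(n)$ times the number of pairs $(j,\lambda)$ with $0\le j\le k$ and $\lambda\vdash j$ having $d-1$ distinct parts; since a partition with $d-1$ distinct parts has size at least $1+2+\cdots+(d-1)=(d-1)d/2$, this number is $\sum_{j=(d-1)d/2}^{k}q(j,d-1)$, it equals $1$ when $d=1$ (only the empty partition), and it vanishes once $(d-1)d/2>k$, i.e.\ for $d>D=\bigl\lfloor(1+\sqrt{1+8k})/2\bigr\rfloor$. Plugging these values of $N_{n-1-k,d}$ into the displayed identity for $\EE(N^R_{n-1-k})$ gives exactly the formula of Theorem~\ref{thm:exp2}.
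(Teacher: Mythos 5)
Your argument is correct and follows essentially the same route as the paper: linearity of expectation over sequences grouped by support size $d$, Gao's parametrization by a generator $g$ and a tuple $\mathbf x$, the shift $x_i\mapsto x_i-1$ converting the count into partitions of some $j\le k$ with $d-1$ distinct parts, and the cutoff $d\le D$ from $(d-1)d/2\le j\le k$. The one point where you go beyond the paper is in checking that $(g,\mathbf x)\mapsto S_{g,\mathbf x}$ is injective and that $g$ actually occurs in $S_{g,\mathbf x}$ --- the paper silently assumes both when it writes $N_{n-1-k,d}=\varphi(n)X_{k,d-1}$ --- and your caution is justified: for the single degenerate pair $n=3$, $k=1$ (where $n-1-2k=0$) the map is two-to-one, the sequences $(x_1g)$ with $x_1=2$ have support size $1$ rather than $2$, and the stated formula overcounts the true expectation $2p$ by $2p^2$, so that case genuinely requires the separate treatment (or exclusion) you flag rather than being absorbable into the general argument.
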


The number $q(j,d-1)$ can be computed in two ways: The first way is based on its generating function (see Section~\ref{subsec:thm:exp2} for  details). Second, we provide a recursive formula for computing $X_{k,d-1}=\sum_{j=(d-1)d/2}^k q(j,d-1)$ (see Section~\ref{subsec:recursive}).

If $k$ is fixed, then we can obtain a simpler statement as follows. 
\begin{corollary}\label{coro:exp2}
If $k$ is fixed and $p=o(1)$ as $n\rightarrow \infty$, then
\begin{equation*} \label{eq:Exp3}
\EE\(N^R_{n-1-k}\)=p\varphi(n)\left(1+O_k(p)\right),
\end{equation*}
where the constant in $O_k$ depends only on $k$.
\end{corollary}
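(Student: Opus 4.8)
The plan is to derive Corollary~\ref{coro:exp2} directly from Theorem~\ref{thm:exp2} by a routine asymptotic analysis of the finite sum appearing there. Since $k$ is fixed, the quantities $D = \left\lfloor \tfrac{1+\sqrt{1+8k}}{2}\right\rfloor$ and $X_{k,d-1} = \sum_{j=(d-1)d/2}^{k} q(j,d-1)$ depend only on $k$; in particular $D$ is a constant and each $X_{k,d-1}$ is a nonnegative integer bounded by a function of $k$ alone. Thus the bracketed expression in Theorem~\ref{thm:exp2} reads $p + \sum_{d=2}^{D} X_{k,d-1}\,p^{d}$, a polynomial in $p$ of fixed degree $D$ with coefficients bounded in terms of $k$.

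First I would factor out $p$, writing the bracket as $p\bigl(1 + \sum_{d=2}^{D} X_{k,d-1}\,p^{d-1}\bigr)$. Since $p = o(1)$ as $n\to\infty$, every term $p^{d-1}$ with $d\ge 2$ satisfies $p^{d-1} = O_k(p)$ (indeed $p^{d-1}\le p$ for $p\le 1$ and $d\ge 2$), so the finite sum $\sum_{d=2}^{D} X_{k,d-1}\,p^{d-1}$ is $O_k(p)$, with the implied constant $\sum_{d=2}^{D} X_{k,d-1}$ depending only on $k$. Hence the bracket equals $p\bigl(1 + O_k(p)\bigr)$, and multiplying by $\varphi(n)$ gives $\EE(N^R_{n-1-k}) = p\,\varphi(n)\bigl(1 + O_k(p)\bigr)$, as claimed.

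There is essentially no obstacle here: the only thing to check carefully is that $D$ and the partition counts $q(j,d-1)$ (equivalently $X_{k,d-1}$) are genuinely functions of $k$ only and do not secretly depend on $n$, which is immediate from their definitions in Theorem~\ref{thm:exp2}. One should also note the edge case $k=0$, where $D = 1$, the sum over $d$ is empty, and the formula collapses to $\EE(N^R_{n-1}) = p\,\varphi(n)$ exactly, consistent with the corollary. I would keep the write-up to a few lines, simply invoking Theorem~\ref{thm:exp2} and performing the factor-and-bound step above.
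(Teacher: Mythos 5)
Your proposal is correct and is essentially identical to the paper's own proof: both start from Theorem~\ref{thm:exp2}, factor out $p\varphi(n)$, and bound the remaining finite sum $\sum_{d=2}^{D} X_{k,d-1}\,p^{d-1}$ by $O_k(p)$ using that $D$ and the $X_{k,d-1}$ depend only on $k$. Your extra remarks on the $k=0$ edge case and on the $n$-independence of the constants are sensible but not needed.
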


Next, we have a concentration result on $N^R_{n-1-k}$ when $k$ is fixed.

\begin{theorem}\label{thm:concentration2} Let $k$ be fixed, and let $p$ be such that
$$\frac{(\log n)^{2d} \log\log n}{n}\ll p\ll 1.$$ Then, asymptotically almost surely (a.a.s.)
$$N^R_{n-1-k}=p\varphi(n)+O_k\(p^2\varphi(n)+\sqrt{p\varphi(n)}(\log n)^d\),$$
where the constant in $O_k$ depends only on $k$.
\end{theorem}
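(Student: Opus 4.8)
The plan is to apply the Kim--Vu polynomial concentration inequality to $N^R_{n-1-k}$, viewed as a polynomial of bounded degree in the independent Bernoulli$(p)$ indicators $t_a$ of the events $\{a\in R\}$, $a\in C_n$. Writing $N^R_{n-1-k}=\sum_{E\subseteq C_n}w_E\prod_{a\in E}t_a$, where $w_E\ge 0$ is the number of zero-sum free sequences of length $n-1-k$ over $C_n$ whose set of distinct elements equals $E$, Theorem~\ref{thm:Gao} shows that every such sequence has the shape $g^{n-1-2k}\cdot(x_1g)\cdots(x_kg)$ for a generator $g$ and integers satisfying~\eqref{eq:x_i}; hence its distinct elements lie in $\{g\}\cup\{x_ig:1\le i\le k\}$, so $w_E=0$ unless $|E|\le d:=D=\big\lfloor(1+\sqrt{1+8k})/2\big\rfloor$. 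Thus $N^R_{n-1-k}$ is a polynomial with non-negative coefficients of degree $d$, where $d$ depends only on $k$ --- precisely the setting of Kim--Vu.

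Next I would estimate the quantities entering Kim--Vu. From Theorem~\ref{thm:exp2},
\[
\mathbb{E}\big(N^R_{n-1-k}\big)=p\varphi(n)+\varphi(n)\sum_{d'=2}^{D}p^{d'}\Big(\textstyle\sum_{j=(d'-1)d'/2}^{k}q(j,d'-1)\Big)=p\varphi(n)+O_k\big(p^2\varphi(n)\big),
\]
the last equality because $D$ and the inner sums are $O_k(1)$ and $p\le 1$. For the truncated derivative expectations, fix a nonempty $A\subseteq C_n$ and any $a_0\in A$; then $\mathbb{E}(\partial_A N^R_{n-1-k})=\sum_{E\supseteq A}w_E\,p^{|E|-|A|}\le\sum_{E\ni a_0}w_E$, and $\sum_{E\ni a_0}w_E$ equals the number of zero-sum free sequences of length $n-1-k$ in which the fixed element $a_0$ occurs. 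By Theorem~\ref{thm:Gao} again, such a sequence is specified by a generator $g$ and a tuple $(x_1,\dots,x_k)$ as in~\eqref{eq:x_i}, and the requirement $a_0\in\{g\}\cup\{x_ig\}$ forces $g=a_0$ or else $vg=a_0$ in $C_n$ for some integer $2\le v\le 2k$: the first case admits only $O_k(1)$ tuples, while the equation $vg=a_0$ has at most $v\le 2k$ solutions $g$, each contributing $O_k(1)$ tuples. Hence $\sum_{E\ni a_0}w_E=O_k(1)$, so $\mathcal{E}':=\max_{|A|\ge 1}\mathbb{E}(\partial_A N^R_{n-1-k})=O_k(1)$. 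Finally, since $\varphi(n)\gg n/\log\log n$, the hypothesis on $p$ gives $p\varphi(n)\gg(\log n)^{2d}\to\infty$, so $\mathbb{E}(N^R_{n-1-k})$ dominates $\mathcal{E}'$ and the Kim--Vu parameter $\mathcal{E}:=\max\big(\mathbb{E}(N^R_{n-1-k}),\mathcal{E}'\big)$ satisfies $\mathcal{E}=O_k(p\varphi(n))$; consequently $\sqrt{\mathcal{E}\mathcal{E}'}=O_k\big(\sqrt{p\varphi(n)}\big)$.

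Now I apply Kim--Vu: there are constants $c_d,C_d$ depending only on $k$ such that
\[
\Pr\Big[\big|N^R_{n-1-k}-\mathbb{E}(N^R_{n-1-k})\big|>c_d\sqrt{\mathcal{E}\mathcal{E}'}\,\lambda^d\Big]\le C_d\,e^{-\lambda+(d-1)\log n}\qquad(\lambda\ge 1).
\]
Taking $\lambda=(d+1)\log n$ bounds the right-hand side by $C_d\,n^{-2}=o(1)$, while $c_d\sqrt{\mathcal{E}\mathcal{E}'}\,\lambda^d=O_k\big(\sqrt{p\varphi(n)}\,(\log n)^d\big)$ because $c_d$ and $(d+1)^d$ depend only on $k$. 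Combining this a.a.s.\ deviation bound with $\mathbb{E}(N^R_{n-1-k})=p\varphi(n)+O_k(p^2\varphi(n))$ gives, a.a.s.,
\[
N^R_{n-1-k}=p\varphi(n)+O_k\big(p^2\varphi(n)\big)+O_k\big(\sqrt{p\varphi(n)}\,(\log n)^d\big)=p\varphi(n)+O_k\big(p^2\varphi(n)+\sqrt{p\varphi(n)}\,(\log n)^d\big),
\]
which is the assertion.

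The main obstacle is the combinatorial input of the second paragraph --- bounding the truncated derivative expectations, i.e.\ showing that no single element of $C_n$ (and, for $|A|\ge 2$, no bounded set of elements) belongs to the support of more than $O_k(1)$ zero-sum free sequences of length $n-1-k$. This is exactly where Gao's structure theorem is indispensable: it reduces each such sequence to a generator plus a bounded amount of partition data, so the relevant degree of overlap is $O_k(1)$ independently of $n$. The remaining ingredients --- choosing $\lambda$, the totient bound $\varphi(n)\gg n/\log\log n$, and comparing $\mathbb{E}(N^R_{n-1-k})$ with $p\varphi(n)$ via Theorem~\ref{thm:exp2} --- are routine.
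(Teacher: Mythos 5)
Your proposal is correct and follows essentially the same route as the paper: apply the Kim--Vu inequality to $N^R_{n-1-k}$ as a degree-$D$ polynomial, compute the expectation via Theorem~\ref{thm:exp2}, show the conditional expectations $\EE_A$ are $O_k(1)$ using Gao's structure theorem, and choose $\lambda$ of order $\log n$. The only (harmless) difference is that you bound $\EE_A$ by the total weight of hyperedges through a single vertex of $A$ rather than via the finer case analysis of Lemmas~\ref{lem:i=1} and~\ref{lem:general_i}, which still yields the needed $\EE'=O_k(1)$.
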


Moreover, we have a refined result. Let $N_{n-1-k,d}$ be the number of zero-sum free sequences  of length $n-1-k$ having $d$ distinct elements over $C_n$. Also, let $N^R_{n-1-k,d}$ be the number of zero-sum free sequences of length $n-1-k$ having $d$ distinct elements contained in a random subset $R$ of $C_n$. We show a concentration result on $N^R_{n-1-k,d}$.

\begin{theorem}\label{thm:concentration}
If $0\leq k\leq \left\lfloor \frac{n}{3}\right\rfloor$ and $$ p\gg \frac{\log \log n}{n},$$ then we have that a.a.s.
$$p\varphi(n)-\omega \sqrt{p\varphi(n)}\leq N^R_{n-1-k,1}\leq p\varphi(n)+\omega \sqrt{p\varphi(n)},$$
where $\omega$ tends to $\infty$ arbitrarily slowly as $n\rightarrow \infty$.

Let $d\geq 2$.
If $k$ is fixed and $$p\gg \frac{(\log n)^2(\log\log n)^{1/d}}{n^{1/d}},$$ then we have that a.a.s.
$$N^R_{n-1-k,d}=p^d\varphi(n) \(\sum_{j=\frac{(d-1)d}{2}}^k q(j,d-1)\)+O_k\(\sqrt{p^d\varphi(n)}(\log n)^d\).$$
\end{theorem}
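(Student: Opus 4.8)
The plan is to reduce everything to a weighted count over the fixed ``shape'' of long zero-sum free sequences given by Theorem~\ref{thm:Gao}, and then apply the Kim--Vu polynomial concentration inequality to each shape separately. First I would set up the exact enumeration: by Theorem~\ref{thm:Gao}, a zero-sum free sequence $S$ of length $n-1-k$ over $C_n$ is determined by a choice of generator $g$ together with an admissible tuple $(x_1,\dots,x_k)$ satisfying \eqref{eq:x_i}, and the number of \emph{distinct} elements appearing in $S$ equals $1+|\{x_1,\dots,x_k\}\setminus\{1\}|$ roughly speaking --- more precisely, after sorting, $S$ has $d$ distinct elements exactly when the multiset $\{g, x_1g,\dots,x_kg\}$ realizes $d$ distinct group elements. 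Translating the counting of admissible tuples with a prescribed number of distinct values into partitions, the element count $N_{n-1-k,d}$ equals $\varphi(n)\sum_{j=(d-1)d/2}^k q(j,d-1)$ for $d\ge 2$ and $\varphi(n)$ for $d=1$; this is exactly the combinatorial heart already used in Theorem~\ref{thm:exp2}, so I would quote it. Consequently $N^R_{n-1-k,d} = \sum_{S} \prod_{a \in \supp(S)} \mathbf 1[a\in R]$, where $S$ ranges over the (deterministic) family of zero-sum free sequences of length $n-1-k$ with exactly $d$ distinct elements, and $\mathbf 1[a\in R]$ are independent Bernoulli$(p)$ variables. Thus $N^R_{n-1-k,d}$ is a polynomial of degree $d$ in the independent indicator variables $\{t_a := \mathbf 1[a\in R]\}_{a\in C_n}$, with all coefficients in $\{0,1\}$, and $\EE(N^R_{n-1-k,d}) = p^d N_{n-1-k,d}$.

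Next I would verify the hypotheses of Kim--Vu. For the polynomial $Y = N^R_{n-1-k,d}$, the partial-derivative truncated expectations $\EE_j(Y) = \max_{|A|=j} \EE(\partial_A Y)$ count, up to the factor $p^{d-j}$, the maximum number of our sequences $S$ whose support contains a fixed $j$-subset of $C_n$. Since a sequence $S$ in our family is pinned down by its generator $g$ and the tuple $(x_1,\dots,x_k)$, fixing any single element $a = m g \in \supp(S)$ (with $0<m<n$) constrains $g$ to one of at most $\tau(m)\le n^{o(1)}$ values, and then the remaining freedom is $O_k(1)$ since $k$ is fixed; fixing $j\ge 2$ elements pins $g$ down to $n^{o(1)}$ choices and leaves $O_k(1)$ completions, so $\EE_j(Y) = O_k\!\big(p^{d-j} n^{o(1)}\big)$ for $1 \le j \le d$ and $\EE_0(Y) = p^d N_{n-1-k,d} = \Theta_k(p^d\varphi(n))$. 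Hence $\mathcal E' := \max_{1\le j\le d}\EE_j(Y) = O_k(p^{d-1}n^{o(1)})$ and $\mathcal E := \max_{0\le j\le d}\EE_j(Y)$. The Kim--Vu theorem then yields, for a suitable constant $c_d$ and any $\lambda>1$,
\[
\Pr\!\Big(\,\big|Y - \EE(Y)\big| > c_d \sqrt{\mathcal E\,\mathcal E'}\,\lambda^d\,\Big) \le d\cdot e^{2}\,n^{-\lambda}.
\]
Taking $\lambda = C\log n$ makes the failure probability $o(1)$, and $\sqrt{\mathcal E\,\mathcal E'}\,\lambda^d \le \sqrt{p^d\varphi(n)}\cdot n^{o(1)}\cdot (\log n)^d$, which I would sharpen to $O_k(\sqrt{p^d\varphi(n)}(\log n)^d)$ by checking that the $n^{o(1)}$ factor coming from the divisor bound is in fact absorbable under the stated lower bound on $p$: the condition $p \gg (\log n)^2 (\log\log n)^{1/d} n^{-1/d}$ is precisely what guarantees $p^d \varphi(n) \gg (\log n)^{2d}(\log\log n)$ up to the $\varphi(n)/n = n^{-o(1)}$ loss, so that the error term is genuinely of smaller order than the main term $p^d\varphi(n)N_{n-1-k,d}/\varphi(n)$, i.e.\ than $p^d\varphi(n)$.

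For the case $d=1$ the family of sequences is just the $\varphi(n)$ constant sequences $g\bdot g\bdot\cdots\bdot g$ restricted appropriately --- wait, more carefully, for $d=1$ the sequences counted are those using a single group element, which by Theorem~\ref{thm:Gao} forces that element to be a generator and the tuple to be $(1,\dots,1)$ (when $k\le \lfloor n/3\rfloor$ this $x$-tuple is admissible), so $N^R_{n-1-k,1} = \sum_{g \text{ generator}} t_g$ is simply a \emph{linear} form, a sum of $\varphi(n)$ independent Bernoullis. Then Chernoff's bound gives deviations of order $\sqrt{p\varphi(n)}$ times any $\omega\to\infty$, which is exactly the claimed two-sided bound, and the very weak hypothesis $p \gg \log\log n / n$ just ensures $p\varphi(n)\to\infty$ so that the relative error vanishes. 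The main obstacle, and the step I expect to require the most care, is the divisor-bound argument controlling $\mathcal E'$ uniformly: I must confirm that fixing a subset $A$ of putative support elements really does reduce the count of compatible $(g, x_1,\dots,x_k)$ to $n^{o(1)}$ \emph{for every} choice of $A$ --- including pathological ones where several of the fixed elements coincide with small multiples $x_i g$ of each other --- and that the resulting $n^{o(1)}$ slack is dominated by the $(\log n)^{2}(\log\log n)^{1/d}$ headroom built into the hypothesis on $p$; once that uniform bound is in hand, the rest is a direct, if slightly bookkeeping-heavy, application of Kim--Vu.
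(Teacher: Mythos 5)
Your overall architecture coincides with the paper's: Chernoff for $d=1$ (where $N^R_{n-1-k,1}$ is a sum of $\varphi(n)$ independent Bernoullis indexed by the generators), and Kim--Vu for $d\ge 2$ applied to the degree-$d$ polynomial $\sum_S\prod_{a\in\supp(S)}\mathbf 1[a\in R]$, with the mean supplied by the count behind Theorem~\ref{thm:exp2}. The $d=1$ case and the identification of the mean are fine.

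The gap is exactly where you flagged it: your bound $\EE_j(Y)=O_k(p^{d-j}n^{o(1)})$ is both misjustified and too lossy to yield the stated error term. For the justification: the number of generators $g$ with $mg=a$ is not controlled by $\tau(m)$; it is $\gcd(m,n)$ when the congruence is solvable. More seriously, any $n^{o(1)}$ loss in $\mathcal E'$ propagates into the Kim--Vu deviation as $\sqrt{\mathcal E'\mathcal E^*}\,\lambda^d=n^{o(1)}\sqrt{p^d\varphi(n)}(\log n)^d$, and an $n^{o(1)}$ factor can exceed any fixed power of $\log n$; the hypothesis $p\gg(\log n)^2(\log\log n)^{1/d}n^{-1/d}$ controls the ratio of error to main term but cannot convert an $n^{o(1)}\cdot(\log n)^d$ error into the asserted $O_k((\log n)^d)$ error. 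The missing ingredient, which is how the paper's Lemmas~\ref{lem:i=1} and~\ref{lem:general_i} proceed, is that no divisor bound is needed at all: by Theorem~\ref{thm:Gao} the constraint $x_1+\dots+x_k\le 2k$ forces every $x_i\le k+1$, so every support element of such a sequence is of the form $mg$ with $1\le m\le k+1$. For a fixed $a$ there are therefore at most $(k+1)$ choices of $m$ and, for each, at most $\gcd(m,n)\le k+1$ compatible generators $g$, i.e.\ $O_k(1)$ pairs $(m,g)$ in total; pinning down $g$ and one coordinate leaves $O_k(1)$ completions of the $x$-tuple. This gives the clean bounds $\EE_{d,i}=O_k(p^{d-i})$, hence $\EE'=O_k(1)$ and $\EE^*=O_k(p^d\varphi(n))$ once $p^d\varphi(n)\gg 1$, and then Kim--Vu with $\lambda=d\log n$ delivers exactly the claimed $O_k\bigl(\sqrt{p^d\varphi(n)}(\log n)^d\bigr)$. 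With that substitution your argument closes; without it, it does not prove the theorem as stated.
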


The organization of this paper is as follows. In Section~\ref{sec:expectation}, we consider expectations and prove Theorem~\ref{thm:exp2} and Corollary~\ref{coro:exp2}. Then, we deal with our concentration results and prove Theorems~\ref{thm:concentration2} and~\ref{thm:concentration} in Section~\ref{sec:concentration}.

%%%%%%%%%%%%%%%%%

\section{Expectation}\label{sec:expectation}

In this section, we prove Theorem~\ref{thm:exp2} and Corollary~\ref{coro:exp2}. Also, we provide a recursive formula to compute the important value $X_{k,d-1}=\sum_{j=(d-1)d/2}^k q(j,d-1)$ given in Theorem~\ref{thm:exp2}.

\subsection{Proofs of Theorem~\ref{thm:exp2} and Corollary~\ref{coro:exp2}}\label{subsec:thm:exp2}

It turns out that the number of distinct elements in a zero-sum free sequence plays an important role since each element in $C_n$ is contained in a random set $R$ with probability $p$. 
Recall that $N_{n-1-k,d}$ is the number of zero-sum free sequences of length $n-1-k$ having $d$ distinct elements over $C_n$, and $N^R_{n-1-k,d}$ is the number of zero-sum free sequences over $C_n$ of length $n-1-k$ having $d$ distinct elements contained in a random set $R$.

Clearly, the expectation of $N^R_{n-1-k,d}$ is
\[
\EE\(N^R_{n-1-k,d}\) =  p^d N_{n-1-k,d}.
\]
Based on Theorem~\ref{thm:Gao}, the numbers $N_{n-1-k,d}$ and $N^R_{n-1-k,d}$ are related to the number 
of $$(x_1, x_2, \ldots, x_k)$$ satisfying that $x_1, x_2, \ldots, x_k$ are positive integers such that~\eqref{eq:x_i} holds and the number of distinct $x_i \neq 1$ is $d-1$. With $x'_i:=x_i-1$, the number can be simplified as follows.
\begin{definition}\label{fact:X} Let $X_{k,d}$ be the number of $(x'_1, x'_2,\dots x'_k)$ such that $$0 \leq x'_1\leq  x'_2\leq \dots\leq x'_k, \hskip 1em
 x'_1 + x'_2 + \dots + x'_k \le k,$$
and the number of distinct positive $x'_i$ is $d$.
\end{definition}

Theorem~\ref{thm:Gao} and Definition~\ref{fact:X} give that
\begin{equation} \label{eq:N}
N_{n-1-k,d} = \varphi(n)  X_{k,d-1},
\end{equation}
where $\varphi(n)$ is the number of generators in $C_n$. Therefore, the expectation of $N^R_{n-1-k,d}$
 is
\begin{equation}\label{lem:expectation_N^R}
 \EE(N^R_{n-1-k,d})= p^d \varphi(n) X_{k,d-1}.
\end{equation}

From now on, we focus on estimating $ X_{k,d-1}$. To this end, we use the definition of a partition of an integer.
A {\it partition} of a positive integer $k$ is a non-decreasing sequence whose sum equals~$k$. A partition $\lambda$ of $k$ can be shortly expressed by
\[
  1^{r_1} \ 2^{r_2} \ \cdots \ t^{r_t}
\]
meaning that $$k = (\underset{r_1}{\underbrace{1+1+\ldots+1}}) + (\underset{r_2}{\underbrace{2+2+\ldots+2}}) + \ldots + (\underset{r_t}{\underbrace{t+t+\ldots+t}}).$$
If $\lambda$ is a partition of $k$, then we denote $\lambda \vdash k$. Let $|\lambda|=k$ if $\lambda \vdash k$.

Let $q(k,d)$ be the number of partitions of $k$ having $d$ distinct parts.
For example, all partitions of $7$ are as follows:
\begin{itemize}
\item $1^{7}, \,\, 7^{1}$,

\item $1^{5} 2^{1}, \,\, 1^{3} 2^{2}, \,\, 1^{1} 2^{3}, \,\, 1^{4} 3^{1}, \,\, 1^{1} 3^{2}, \,\, 2^{2} 3^{1}, \,\, 1^{3} 4^{1}, \,\, 3^{1} 4^{1}, \,\, 1^{2} 5^{1}, \,\, 2^{1} 5^{1}, \,\, 1^{1} 6^{1}$,

\item $1^{2} 2^{1} 3^{1}, \,\, 1^{1} 2^{1} 4^{1}$.
\end{itemize}
We have that $q(7,1) = 2$, $q(7,2) = 11$, $q(7,3) = 2$, and $q(7,d) = 0$ for  $d \ge 4$.

Recalling Definition~\ref{fact:X},  we have that $X_{k,d}$ is the same as the number of partitions $\lambda$ of at most $k$ having $d$ distinct parts. Observe that if $\lambda$ is counted for $X_{k,d}$, then
\begin{equation}\label{eq:range_d}\frac{d(d+1)}{2}\leq |\lambda|\leq k\end{equation}
because $\lambda$ contains parts with at least $1, 2, \ldots, d$. Thus, we have
\begin{equation} \label{eq:X1}
 X_{k,d} = \sum_{j=\frac{d(d+1)}{2}}^k q(j,d).
\end{equation}

Remark that the number $q(j,d)$ can be found in A116608 of the on-line encyclopedia of integer sequences (OEIS), and it can be computed from its generating function
\begin{equation*} Q(x,t)=-1+\prod_{i=1}^{\infty}\(1+\frac{tx^i}{1-x^i}\),
\end{equation*}
where $$Q(x,t)=\sum_{j,d\geq 1}q(j,d)x^jt^d.$$
There are related results on $q(j,d)$. Kim~\cite{Kim_2012} constructed a generating function with one variable for $q(j,d)$ when $d$ is fixed. Also, Goh and Schmutz~\cite{GS_1995} obtained the asymptotic distribution of the number of distinct part sizes in a random integer partition.
On the other hand, $X_{k,d}$ is not found in OEIS.

We are ready to prove Theorem~\ref{thm:exp2}.
\begin{proof}[Proof of Theorem~\ref{thm:exp2}] Trivially, the expected number of zero-sum free sequences of length $n-1-k$ with same elements  in $R$ is $$\EE\(N^R_{n-1-k,1}\)=\varphi(n)p.$$
Next, for $d\geq 2$, we infer that
\begin{equation*}
\EE\(N^R_{n-1-k,d}\)=p^d N_{n-1-k,d}\overset{\eqref{eq:N}}{=}p^d\varphi(n)X_{k,d-1}\overset{\eqref{eq:X1}}{=}p^d\varphi(n)\(\sum_{j=\frac{(d-1)d}{2}}^{k} q(j,d-1)\).
\end{equation*}

Next we consider the range of $d$. If $X_{k,d-1}$ is positive, then~\eqref{eq:range_d} gives that
$$k\geq \frac{(d-1)d}{2}.$$ Hence, let $D$ be the lagest integer $d$ satisfying $(d-1)d/2\leq k$, and then,
$$d\leq D=\left\lfloor \frac{1+\sqrt{1+8k}}{2}\right\rfloor.$$ This completes our proof of  Theorem~\ref{thm:exp2}.
\end{proof}

 Now we are ready to prove Corollary~\ref{coro:exp2} using Theorem~\ref{thm:exp2}.

\begin{proof}[Proof of Corollary~\ref{coro:exp2}]
For a fixed $k$, Theorem~\ref{thm:exp2} gives that
\begin{eqnarray*} \label{eq:Exp}
\EE\(N^R_{n-1-k}\)&=&\varphi(n) \left[ p + \sum_{d = 2}^{D} p^{d} \(\sum_{j=\frac{(d-1)d}{2}}^k q(j,d-1)\)\right] \\
&=&p\varphi(n)\left(1+\sum_{d=2}^DO_k(p^{d-1})\right) \\ &=&p\varphi(n)\left(1+O_k(p)\right),
\end{eqnarray*}
where the constant in $O_k$ depends only on $k$, which completes the proof of Corollary~\ref{coro:exp2}.
\end{proof}

\subsection{Recursive formula for $X_{k,d}$}\label{subsec:recursive}

Here, we give another way to compute the important value $$X_{k,d-1}=\sum_{j=\frac{(d-1)d}{2}}^k q(j,d-1)$$ given in Theorem~\ref{thm:exp2} using a recursive formula.

A partition of an integer can be illustrated by a {\it Young diagram} (also called a {\it Ferrers diagram}), which is a useful way to understand a partition in  combinatorics. A Young diagram corresponding to a partition $\lambda \vdash k$ is a collection of left-justified rows of $k$ boxes piled up in non-decreasing order of row lengths from parts. For example, the partition $1^{2}2^{1}3^{1}\vdash 7$ corresponds to the Young diagram $${\scriptsize\Yautoscale1\yng(3,2,1,1)}.$$

Let $Y_{b,c,d}$ be the number of partitions of at most $b$ with at most $c$ parts having $d$ distinct parts. Equivalently, $Y_{b,c,d}$ is the number of Young diagrams with at most $b$ boxes, at most $c$ rows, and $d$ distinct rows. See Figure~\ref{fig:Y} (a).
\begin{figure}[!t]
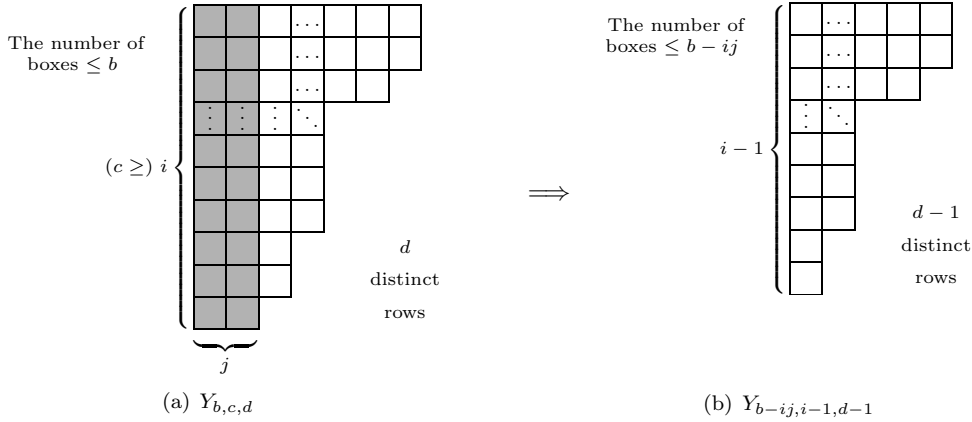

\hspace{-1cm}
\subfigure[$Y_{b,c,d}$]{{\scriptsize
\begin{tabular}{r@{}l}
\raisebox{-17ex}{$\begin{tabular}{c} The number of \\ boxes $\le b$ \vspace{3cm} \end{tabular} \hspace{-20pt} (c \ge)\ i\left\{\vphantom{\begin{array}{c}\\[39ex] \end{array}}\right.$} &
\begin{ytableau}
*(black!30)      & *(black!30)      &       & \dots &       &       &       \\
*(black!30)      & *(black!30)      &       & \dots &       &       &       \\
*(black!30)      & *(black!30)      &       & \dots &       &       & \none \\
*(black!30)\raisebox{-2.5pt}\vdots& *(black!30)\raisebox{-2.5pt}\vdots& \raisebox{-2.5pt}\vdots& \raisebox{-2.5pt}{$\ddots$}& \none& \none & \none \\
*(black!30)      & *(black!30)      &       &       & \none & \none & \none \\
*(black!30)      & *(black!30)      &       &       & \none & \none & \none \\
*(black!30)      & *(black!30)      &       &       & \none & \none & \none \\
*(black!30)      & *(black!30)      &       & \none & \none & \none & \none[d] \\
*(black!30)      & *(black!30)      &       & \none & \none & \none & \none[\textnormal{ distinct }] \\
*(black!30)      & *(black!30)      & \none & \none & \none & \none & \none[\textnormal{ rows }] \\
\end{ytableau}\\[-1.5ex]
& $\underbrace{\hspace{3em}}_{\displaystyle j}$ \vspace{5pt}
\end{tabular}}} \hspace{1cm} $\Longrightarrow$
\subfigure[$Y_{b-ij,i-1,d-1}$]{{\scriptsize
\begin{tabular}{r@{}l}
\raisebox{-15ex}{$\begin{tabular}{c} The number of \\ boxes $\le b-ij$ \vspace{3cm} \end{tabular} \hspace{-15pt} i-1\left\{\vphantom{\begin{array}{c}\\[35ex] \end{array}}\right.$} &
\begin{ytableau}
~       & \dots&        &       &            \\
~       & \dots&        &       &            \\
~       & \dots&        &       & \none      \\
\raisebox{-2.5pt}\vdots & \raisebox{-2.5pt}{$\ddots$} & \none & \none & \none & \none   \\
~       &       & \none & \none & \none      \\
~       &       & \none & \none & \none      \\
~       &       & \none & \none & \none[d-1] \\
~       & \none & \none & \none & \none[\textnormal{ distinct }] \\
~       & \none & \none & \none & \none[\textnormal{ rows }]     \\
\end{ytableau}\vspace{35pt}%\\[1.5ex]
\end{tabular}}}
\caption{Definition of $Y_{b,c,d}$ and the deletion process}
\label{fig:Y}
\end{figure}

Note that $Y_{b,c,d}>0$ if and only if  $b\geq \frac{d(d+1)}{2}$ and $c\geq d$, where the first inequaity follows from~\eqref{eq:range_d}.  Observe that $$X_{k,d}=Y_{k,k,d}.$$

A recursive formula for $Y_{b,c,d}$ is as follows. (Hence we have a recursive formula for $X_{k,d}$.)

\begin{lemma} We have that, for $b\geq \frac{d(d+1)}{2}$ and $c\geq d\geq 2$,
\begin{equation}\label{eq:Y(1)}Y_{b,c,d}=\sum_{i=1}^c\sum_{j=1}^{\left\lfloor \frac{b-(d-1)d/2}{i}\right\rfloor} Y_{b-ij,i-1,d-1}
\end{equation}
and, for $b\geq 1$ and $c\geq 1$,
\begin{equation}\label{eq:Y(2)}
Y_{b,c,1}=\sum_{i=1}^c\left\lfloor\frac{b}{i} \right\rfloor.
\end{equation}

\end{lemma}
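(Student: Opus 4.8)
The plan is to prove the two recursions by a direct combinatorial decomposition of the Young diagrams counted by $Y_{b,c,d}$, so nothing beyond Theorem~\ref{thm:Gao} and the definitions is needed. Recall that $Y_{b,c,d}$ counts Young diagrams with at most $b$ boxes, at most $c$ rows, and exactly $d$ distinct row lengths. The key idea is to peel off the \emph{longest} (equivalently, the bottom-most) block of equal rows. First I would fix such a diagram $D$ counted by $Y_{b,c,d}$ with $d\ge 2$. Let $i$ be the number of rows of maximal length and let $j$ be that maximal length; so $D$ has exactly $i$ rows of length $j$ at the bottom, and these account for $ij$ of its boxes. Deleting these $i$ rows leaves a Young diagram $D'$ whose row lengths are all strictly less than $j$; since $D'$ must still contain rows of every length $1,2,\dots,d-1$ (those distinct lengths were already present above the maximal block, as $d\ge 2$ forces at least one row shorter than $j$), $D'$ has exactly $d-1$ distinct row lengths, at most $i-1$ rows, and at most $b-ij$ boxes. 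Hence $D'$ is counted by $Y_{b-ij,\,i-1,\,d-1}$. See Figure~\ref{fig:Y}~(b).

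Conversely, given any diagram $D'$ counted by $Y_{b-ij,\,i-1,\,d-1}$ together with a choice of $i\ge 1$ and $j\ge 1$ such that every row of $D'$ has length at most $j-1$, appending $i$ rows of length $j$ at the bottom produces a diagram counted by $Y_{b,c,d}$ (the row count is at most $(i-1)+i\le$\dots — here one checks $i\le c$ is exactly the outer summation range, and the distinct-row count becomes $d$ since the new length $j$ exceeds every old one). This sets up a bijection between the set counted by $Y_{b,c,d}$ and the disjoint union, over $1\le i\le c$ and admissible $j$, of the sets counted by $Y_{b-ij,\,i-1,\,d-1}$. The only point needing care is the range of $j$: I must argue that a term $Y_{b-ij,\,i-1,\,d-1}$ is nonzero precisely when $ij\le b-(d-1)d/2$, because a diagram with $d-1$ distinct rows needs at least $1+2+\cdots+(d-1)=(d-1)d/2$ boxes; combined with $b-ij\ge(d-1)d/2$ this gives $j\le\lfloor(b-(d-1)d/2)/i\rfloor$, matching the upper limit in~\eqref{eq:Y(1)}. (For $j$ in that range, the constraint ``all rows of $D'$ have length $\le j-1$'' is automatically compatible because such a $D'$ exists with all parts equal to $1$, say; more precisely the bijection is between \emph{pairs} $(D',\text{block})$, and summing over $j$ in the stated range with $D'$ ranging over \emph{all} diagrams counted by $Y_{b-ij,i-1,d-1}$ overcounts unless one is careful — so the cleaner formulation is to let $(i,j)$ be determined by $D$ and note the summand $Y_{b-ij,i-1,d-1}$ already encodes ``longest row $<j$'' because such $D'$ has at most $i-1$ rows, all of which were strictly shorter than $j$ in $D$). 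This bookkeeping about whether the bound on the parts of $D'$ is implied by, or must be imposed in addition to, the three parameters of $Y_{b-ij,i-1,d-1}$ is the main obstacle, and I would resolve it by making the map ``$D\mapsto(i,j,D')$'' the primary object and checking it is a bijection onto the indicated disjoint union rather than trying to build the inverse first.

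For the base case~\eqref{eq:Y(2)}, a diagram counted by $Y_{b,c,1}$ has all its (at most $c$) rows of one common length $j\ge 1$; if it has exactly $i$ rows then $1\le i\le c$ and we need $ij\le b$, i.e. $1\le j\le\lfloor b/i\rfloor$. Summing the number of valid $j$ over $i=1,\dots,c$ gives $\sum_{i=1}^{c}\lfloor b/i\rfloor$, which is~\eqref{eq:Y(2)}. Finally I would remark that this peeling recursion, started from $Y_{k,k,d}=X_{k,d}$, terminates in at most $d$ steps and reduces everything to instances of~\eqref{eq:Y(2)}, giving the promised recursive computation of $X_{k,d-1}=\sum_{j=(d-1)d/2}^{k}q(j,d-1)$.
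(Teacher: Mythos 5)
Your proof of the base case \eqref{eq:Y(2)} is correct and matches the paper. But your proof of \eqref{eq:Y(1)} has a genuine error: you peel off the wrong block. You set $i$ to be the number of rows of \emph{maximal} length $j$ and delete those $i$ rows, then claim the remainder $D'$ has at most $i-1$ rows. That is false: if $D=1^{1}2^{1}$ (so $b=3$, $d=2$), the maximal length is $j=2$ and there is $i=1$ row of that length; deleting it leaves $D'=1^{1}$, which has $1$ row, not at most $i-1=0$. Your map sends this $D$ into the set counted by $Y_{1,0,1}=0$, so the "bijection" does not exist, and the identity $Y_{3,3,2}=1$ is in fact witnessed by a different term of the sum, namely $(i,j)=(2,1)$ with $Y_{1,1,1}=1$. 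The same error surfaces in your reverse direction, where the reconstructed diagram has up to $(i-1)+i=2i-1$ rows while the outer sum only guarantees $i\le c$, and in your worry about whether the bound "all parts of $D'$ are $<j$" is or is not encoded by the parameters of $Y_{b-ij,i-1,d-1}$ (with your decomposition it is not, so the sum would also overcount).

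The decomposition that actually proves \eqref{eq:Y(1)} (and is what Figure~\ref{fig:Y} depicts) takes $i$ to be the \emph{total} number of rows of $D$ and $j$ to be the \emph{smallest} part, and deletes the full-height $i\times j$ rectangle formed by the leftmost $j$ columns, i.e., subtracts $j$ from every part. The parts equal to $j$ vanish, so at most $i-1$ rows survive; the surviving parts are the old parts minus $j$, so exactly $d-1$ distinct values remain; and at most $b-ij$ boxes remain. The inverse map adds $j$ to every part of $D'$ and appends rows of length $j$ until there are $i$ rows in total; since every part of $D'$ is positive, every non-appended part becomes $>j$ automatically, so no side condition on $D'$ is needed and the summand is exactly $Y_{b-ij,i-1,d-1}$. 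The range $1\le j\le\bigl\lfloor (b-(d-1)d/2)/i\bigr\rfloor$ then follows as you argued, since a diagram with $d-1$ distinct parts needs at least $(d-1)d/2$ boxes. Also, a minor point: your parenthetical claim that $D'$ "must still contain rows of every length $1,2,\dots,d-1$" is not needed for, and is not true of, either decomposition (consider $D=2^{1}5^{1}$).
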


\begin{proof} We first show~\eqref{eq:Y(1)}. 
We delete the gray retangle in Figure~\ref{fig:Y} from a Young diagram counted for $Y_{b,c,d}$, and then we have a Young diagram with at most $b-ij$ boxes, at most $i-1$ rows, and $d-1$ distinct rows.

We consider the ranges of $i$ and $j$. Clearly, the range of $i$ is $1\leq i\leq c$. Then the remaining Young diagram after the deletion has $d-1$ distinct rows, and hence, it has at least $(d-1)d/2$ boxs. Thus,
$$ij+\frac{(d-1)d}{2}\leq b.$$
So the range of $j$ is $$1\leq j\leq  \left\lfloor \frac{b-(d-1)d/2}{i} \right\rfloor.$$

Next, we show~\eqref{eq:Y(2)}. The number $Y_{b,c,1}$ is the same as the number of rectangles with at most $b$ boxes and at most $c$ rows. Let $i$ and $j$ be the numbers of rows and columns, respectively, of such a rectangle. Clearly, $1\leq i\leq c$. Since $ij\leq b$, we have $j\leq \left\lfloor \frac{b}{i} \right\rfloor.$
\end{proof}

\section{Concentration}\label{sec:concentration}

Recall that $N^R_{n-1-k,d}$ be the number of zero-sum free sequences of length $n-1-k$ having $d$ distinct elements in a random subset $R$. From~\eqref{lem:expectation_N^R}, recall that
\begin{equation*} \label{eq:Z_d}
\EE(N^R_{n-1-k,d}) = p^d \varphi (n) X_{k,d-1}.
\end{equation*}
From now on, we consider a concentration of $N^R_{n-1-k,d}$ and $N^R_{n-1-k}$ using a graph theoretical approach called the Kim--Vu polynomial concentration result.

\subsection{Kim--Vu polynomial concentration result}
\label{sec:Kim--Vu}
Let~$\cH=(V,E)$ be a weighted hypergraph with $V=[n]:=\{0,1,\dots,n-1\}$.  Recall that~$R$ is a
random subset of~$[n]$ obtained by selecting each $v\in [n]$
independently with probability~$p$.  Let~$\cH[R]$ be the
sub-hypergraph of~$\cH$ induced on~$R$, and we let $Z$ be the sum of weights of hyperedges in $\cH[R]$.  Kim and
Vu~\cite{Kim-Vu_2000} obtained a result that provides a concentration of~$Z$ around its mean $\EE(Z)$ with high
probability.  For more details, see Alon and Spencer \cite{AS_2000}.
To state the result, we need some definitions.

\begin{definition}
  \label{df:Kim--Vu}
  Let $\ell$ be the maximum size of hyperedges in $\cH$, and let
  $A\subset [n]$ be such that $|A|\leq \ell$.  We let
\begin{itemize}
\item $\displaystyle Z_A:=$ the sum of weights of hyperedges in $\cH[R]$ containing $A$,
\item  $\EE_A:=\EE\(Z_A\; |\; A\subset R\)$,
\item $\EE_i:=$ the maximum of $\EE_A$ for $A\subset [n] \text{ with }|A|=i$,
\item
  \begin{equation*}
    \label{eq:E',E}
  \EE':=\max_{1\leq i\leq \ell}\EE_i
    \mbox{ \hskip 1em and \hskip 1em }
    \EE^*:=\max\{\EE', \EE(Z)\}.
  \end{equation*}

\end{itemize}

\end{definition}

The concentration result by Kim and Vu~\cite{Kim-Vu_2000} is as follows.

\begin{theorem}[\textbf{Kim--Vu polynomial concentration inequality}]
  \label{thm:Kim--Vu}
  With the notation as above, we have that, for each $\lambda>1$,
  \begin{equation*}
    \label{eq:Kim--Vu}
    \Pr\left[|Z-\EE(Z)|>a_\ell\sqrt{\EE'\cdot\EE^*}\lambda^\ell\right]<2
    e^{-\lambda+2}n^{\ell-1},
  \end{equation*}
  where~$a_\ell=8^\ell (\ell!)^{1/2}$.
\end{theorem}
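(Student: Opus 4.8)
The statement is the polynomial concentration inequality of Kim and Vu~\cite{Kim-Vu_2000}, which in this paper we use only as a black box, so strictly speaking nothing needs to be reproved here; what follows is merely an outline of how one would argue it from scratch. The plan is to encode $Z$ as a polynomial $Z=Z(t_0,\dots,t_{n-1})$ with nonnegative coefficients in the independent Bernoulli variables $t_v=\mathbf 1[v\in R]$, contributing one monomial $w(e)\prod_{v\in e}t_v$ per hyperedge $e\in E$. Then, up to the ``contains $A$'' bookkeeping in Definition~\ref{df:Kim--Vu}, the quantities $Z_A$, $\EE_A$, $\EE_i$, $\EE'$, and $\EE^*$ are exactly the partial derivatives of $Z$, their conditional expectations, and the relevant maxima; and one proves the tail bound by induction on $\ell$, the maximum hyperedge size, which equals the degree of $Z$.

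First I would dispose of the base case $\ell=1$, where $Z$ is a weighted sum of independent Bernoulli variables and the claim reduces to a Chernoff/Bernstein estimate. For the inductive step I would use a discrete Taylor expansion of $Z$ in the coordinates $t_v$, one at a time, so that the fluctuation of $Z$ about $\EE(Z)$ is governed by the first-order partial derivatives $\partial_v Z=Z_{\{v\}}$, which are polynomials of degree $\ell-1$ and so fall under the inductive hypothesis. Revealing the coordinates in some order, one runs a Freedman/Azuma-type martingale argument in which the per-step increment is controlled by $\max_v Z_{\{v\}}$; the event that some $Z_{\{v\}}$ itself deviates substantially from $\EE_{\{v\}}$ is then absorbed by a union bound over the $n$ choices of $v$. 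This union bound is what produces the extra polynomial factor $n^{\ell-1}$ and forces $\EE'=\max_{1\le i\le\ell}\EE_i$ to appear alongside $\EE^*$, and tracking the multiplicative losses through the $\ell$ levels of the recursion is what yields the constant $a_\ell=8^\ell(\ell!)^{1/2}$ and the shift in $2e^{-\lambda+2}$.

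The hard part is the simultaneous, uniform control of all the partial derivatives together with the bookkeeping that lets the induction close: one needs an ``averaged'' large-deviation lemma (a version of Freedman's inequality in which the increment bounds are themselves random but bounded in conditional expectation), and one must push both the polynomial-in-$n$ loss and the super-exponential-in-$\ell$ constants through all the levels without the estimates degenerating. Since this is carried out in full in~\cite{Kim-Vu_2000} (see also Alon and Spencer~\cite{AS_2000}), for our application it is enough to observe that the hypergraph $\cH$ associated with zero-sum free sequences has bounded edge size $\ell$, so that Theorem~\ref{thm:Kim--Vu} applies, and then to take $\lambda$ of the appropriate logarithmic order to make $2e^{-\lambda+2}n^{\ell-1}=o(1)$.
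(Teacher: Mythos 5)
You are right that the paper offers no proof of this statement: it is quoted verbatim from Kim and Vu~\cite{Kim-Vu_2000} and used purely as a black box, so deferring to the citation is exactly what the paper does. Your sketch of the underlying argument (induction on the degree $\ell$ via the partial derivatives $Z_{\{v\}}$, a martingale/Freedman-type step, and a union bound producing the $n^{\ell-1}$ factor) is a faithful outline of the actual Kim--Vu proof, but nothing beyond the citation is needed here.
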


%%%%%%%%%%%%%%%
\subsection{Hypergraph and example}
 For a given positive integer $k$, we define the hypergraph $\cH_{n-1-k}=\cH_{n-1-k}(C_n)=([n],E)$ such that $a_1\bdot a_2\bdot \dots \bdot a_{n-1-k}$ is a zero-sum free sequence over $C_n$ if and only if the corresponding set $\{b_1, b_2, \dots, b_{\ell}\}=\{a_1, a_2, \dots, a_{n-1-k}\}$, with $1\leq \ell\leq n-1-k$, is contained in $E$. The weight of an hyperedge $\{b_1, b_2, \dots, b_{\ell}\}$ of $\cH_{n-1-k}$ is the number of zero-sum free sequences over $C_n$ consisting of $b_1, b_2, \dots, b_{\ell}$.

Then $\EE_{d,A}$ defined above is the expected number of zero-sum free sequences of length $n-1-k$ having $d$ distinct elements that contains $A\subset C_n$ and is contained in $R$ under the condition that $A\subset R$. Also, for $1\leq i\leq d$, let
\[
  \EE_{d,i} = \max \{ \EE_{d,A} \mid A \subset C_n \mbox{ with } |A| = i \}.
\]
We will estimate $\EE_{d,A}$ and $\EE_{d,i}$.

 For an easier understanding,  we give an example in $C_8$ before estimating $\EE_{d,A}$ and $\EE_{d,i}$ in a general $C_n$.
Let $C_8 = \{ 0,1,2,3,4,5,6,7 \}$ and we consider the case where $k=2$. In this case, the length of zero-sum free sequences is $n-1-k=8-1-2=5$. All generators in $C_8$ are $1,3,5,7$, and all possible $(x_1,x_2)$ in Theorem~\ref{thm:Gao} are $(1,1), (1,2), (1,3),$ and $(2,2).$ Thus, Theorem~\ref{thm:Gao} gives that all zero-sum free sequences of length $5$ over $C_8$ are
\[
  \begin{aligned}
  1\bdot 1\bdot 1\bdot 1\bdot 1 \quad \quad 3\bdot 3\bdot 3\bdot 3\bdot 3 \quad \quad 5\bdot 5\bdot 5\bdot 5\bdot 5 \quad \quad 7\bdot 7\bdot 7\bdot 7\bdot 7 \phantom{.} \\
  1\bdot 1\bdot 1\bdot 1\bdot 2  \quad\quad 3\bdot 3\bdot 3\bdot 3\bdot 6 \quad \quad 5\bdot 5\bdot 5\bdot 5\bdot 2 \quad \quad 7\bdot 7\bdot 7\bdot 7\bdot 6\phantom{.} \\
  1\bdot 1\bdot 1\bdot 1\bdot 3  \quad\quad 3\bdot 3\bdot 3\bdot 3\bdot 1 \quad \quad 5\bdot 5\bdot 5\bdot 5\bdot 7 \quad \quad 7\bdot 7\bdot 7\bdot 7\bdot 5\phantom{.} \\
  1\bdot 1\bdot 1\bdot 2\bdot 2  \quad\quad 3\bdot 3\bdot 3\bdot 6\bdot 6  \quad\quad 5\bdot 5\bdot 5\bdot 2\bdot 2  \quad\quad 7\bdot 7\bdot 7\bdot 6\bdot 6.
  \end{aligned}
\]

Hence, the hypergraph $\cH_5(C_8)$ has hyperedges as follows:
\begin{figure}[!h]
\centering
\begin{tabular}{ c||c|c|c|c|c|c|c|c|c|c }
Hyperedge & $\{1\}$ & $\{3\}$ & $\{5\}$ & $\{7\}$ & $\{1,2\}$ & $\{1,3\}$ & $\{3,6\}$ & $\{5,2\}$ & $\{5,7\}$ & $\{7,6\}$ \\[.5em] \hline
\rule{0pt}{13pt} Weight & 1 & 1 & 1 & 1 & 2 & 2 & 2 & 2 & 2 & 2
\end{tabular}
\end{figure}

As an example, we estimate $\EE_{2,1}$ by considering $\EE_{2,\{a\}}$ for $a \in C_8$. First, let $a=1$. Note that our goal here is not to get the exact value of $\EE_{2,\{1\}}$ but to obtain a uniform upper bound of $\EE_{2,\{a\}}$ for all $a\in C_8$.
For a generator $g$, there are several cases we need to deal with:

$\bullet$ \textbf{Case 1 ($a=1=g$):} Trivially, $g=1$. Since $(x'_1, x'_2)=(0,1), (0,2),$ or $(1,1)$, we have $(x_1, x_2)=(1,2), (1,3),$ or $(2,2)$, and hence, all zero-sum free sequences of this case in $C_8$ are
\[
  \begin{aligned}
  1\bdot 1\bdot 1\bdot 1\bdot (2\cdot 1) \quad \quad 1\bdot 1\bdot 1\bdot 1\bdot (3\cdot 1) \quad \quad 1\bdot 1\bdot 1\bdot (2\cdot 1)\bdot (2\cdot 1).
  \end{aligned}
  \]
  Thus, the expected number of all zero-sum free sequences of this case in $R$ is $$X_{2,1}\cdot p.$$

$\bullet$ \textbf{Case 2 ($a=1=2g$):} There is no such $g$, but we go forward to get a uniform upper bound. Since $a=2g$, we have $x'_\ell=1$ for some $\ell$. Hence, the number of all zero-sum free sequences of this case in $C_8$ is
  at most $X_{2-1,1}+X_{2-1,0}$, where the first term is from the situation when all other $x'$ are different from $x'_\ell$ and the second term is from the other situation.
  Thus, the expected number of all zero-sum free sequences of this case in $R$ is at most
  $$\(X_{2-1,1}+X_{2-1,0}\) p.$$

$\bullet$ \textbf{Case 3 ($a=1=3g$):} We infer that $g=3$ and $(x'_1, x'_2)=(0,2)$. Hence, every zero-sum free sequences of this case in $C_8$ is
  \[
  \begin{aligned}
  3\bdot 3\bdot 3\bdot 3\bdot (3\cdot 3)=3\bdot 3\bdot 3\bdot 3\bdot 1.
  \end{aligned}
  \]
The number of all zero-sum free sequences of this case over $C_8$ is $$X_{2-2,1}+X_{2-2,0} \leq X_{2-1,1}+X_{2-1,0}.$$
Thus, the expected number of all zero-sum free sequences of this case in $R$ is at most $$\(X_{2-1,1}+X_{2-1,0}\) p.$$

Therefore, $$\EE_{2,\{1\}}\leq \(X_{2,1}+2\(X_{2-1,1}+X_{2-1,0}\)\)p.$$
By the same argument, for every $a\in C_8$, we have that $\EE_{2,\{a\}}$ has the same upper bound, and hence,
 $$\EE_{2,1}\leq \(X_{2,1}+2\(X_{2-1,1}+X_{2-1,0}\)\)p.$$
In a similar way, one can estimate $\EE_2$ in  $C_8$, which gives $\EE'$ and $\EE^*$.

\subsection{Estimating $\EE_{d,i}$}$\phantom{}$
We are ready to estimate $\EE_{d,i}$ in a general $C_n$. First, we consider the case where $i=1$.

\begin{lemma}\label{lem:i=1}
For $2\leq d\leq \left\lfloor \frac{1+\sqrt{1+8k}}{2} \right\rfloor$, we have that
\begin{eqnarray*}\EE_{d,1}&\leq& p^{d-1}\(X_{k,d-1}+k(X_{k-1,d-1}+X_{k-1,d-2})\)\mbox{ and }\\
\EE_{1,1}&=&1.
\end{eqnarray*}
\end{lemma}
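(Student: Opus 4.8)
The plan is to reduce the statement to a purely combinatorial count and then apply Theorem~\ref{thm:Gao}. Fix $a\in C_n$. Since the $d$ distinct elements of a zero-sum free sequence lie in $R$ independently with probability $p$ and one of them is the conditioned element $a$, we have $\EE_{d,\{a\}}=p^{\,d-1}M_d(a)$, where $M_d(a)$ denotes the number of zero-sum free sequences of length $n-1-k$ over $C_n$ with exactly $d$ distinct elements that contain $a$; hence $\EE_{d,1}=p^{\,d-1}\max_{a\in C_n}M_d(a)$. For $d=1$ the only such sequence is $g\bdot g\bdot\cdots\bdot g$ ($n-1-k$ times) with $g$ a generator, which contains $a$ iff $a=g$, so $M_1(a)\in\{0,1\}$ and $\EE_{1,1}=1$.

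For $d\ge 2$, Theorem~\ref{thm:Gao} says that every zero-sum free sequence of length $n-1-k$ arises (possibly in more than one way) from a pair $(g,(x_1,\dots,x_k))$ with $g$ a generator and $x_1\le\cdots\le x_k$ satisfying \eqref{eq:x_i} and having exactly $d-1$ distinct values $x_i\neq 1$, and such a sequence contains $a$ precisely when $a=g$ or $a=x_\ell g$ for some $\ell$ with $x_\ell\ge 2$. Summing over all such pairs bounds $M_d(a)$ from above, and I split this count into the two cases. The pairs with $a=g$ force $g=a$, and by Definition~\ref{fact:X} (with $x_i'=x_i-1$) their number equals $X_{k,d-1}$ if $a$ is a generator and $0$ otherwise, so it is at most $X_{k,d-1}$.

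For the pairs with $a=x_\ell g\neq g$ put $m:=x_\ell$; then $mg=a$ and $2\le m\le k+1$ (as $m\le\sum_i x_i-(k-1)\le k+1$). For a fixed such pair $(g,m)$, I would estimate the number of admissible vectors $(x_i)$ in which $m$ occurs by the deletion argument used in Section~\ref{subsec:recursive}: removing one copy of $x_\ell'=m-1$ leaves a partition of size at most $k-(m-1)$ with $d-1$ distinct positive parts (if $m-1$ still occurs) or $d-2$ distinct positive parts (if not), so this number is at most $X_{k-(m-1),d-1}+X_{k-(m-1),d-2}\le X_{k-1,d-1}+X_{k-1,d-2}$. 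It remains to bound the number of pairs $(g,m)$. For each generator $g$ there is at most one $m\in\{2,\dots,k+1\}$ with $mg=a$, since $m_1g=m_2g$ would force $n\mid m_1-m_2$ while $0<|m_1-m_2|\le k-1<n$. If $a$ is a generator, then $mg=a$ with $g$ a unit forces $m$ to be a unit and $g=m^{-1}a$, so there are at most $k$ such pairs; combining, $M_d(a)\le X_{k,d-1}+k\bigl(X_{k-1,d-1}+X_{k-1,d-2}\bigr)$, exactly the claimed bound. This is precisely the generator case worked out for $C_8$ in the excerpt.

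The step I expect to be the main obstacle is the case where $a$ is \emph{not} a generator. Then $e:=\gcd(a,n)\ge 2$ equals $\gcd(m,n)$ for every admissible $m$ (since $g$ is a unit), so $e\mid m$, $m\ge e$, and $mg=a$ has at most $\gcd(m,n)=e$ solutions $g$. The number of pairs $(g,m)$ can now exceed $k$ (it reaches $k+1$ exactly when $e\mid k+1$), but each such pair then contributes only at most $X_{k+1-e,d-1}+X_{k+1-e,d-2}$. A short case analysis on $\sum_{(g,m)}\bigl(X_{k-(m-1),d-1}+X_{k-(m-1),d-2}\bigr)$ — treating $d\ge 3$, where the boundary pairs with $m-1=k$ contribute $X_{0,d-1}+X_{0,d-2}=0$ so that effectively at most $k-1$ pairs remain, separately from $d=2$, where one uses $X_{k,d-1}=X_{k,1}\ge 1$ to absorb the surplus — shows that this sum is still at most $X_{k,d-1}+k\bigl(X_{k-1,d-1}+X_{k-1,d-2}\bigr)$, completing the proof. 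The divisibility bookkeeping for non-generators is the only ingredient not already displayed in the $C_8$ example.
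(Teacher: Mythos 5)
Your proof follows the same two-case decomposition as the paper's (the conditioned element $a$ equals $g$, or equals $x_\ell g$ with $x_\ell\ge 2$), and the per-case counts $X_{k,d-1}$ and $X_{k-1,d-1}+X_{k-1,d-2}$ are identical, so for generator $a$ the two arguments coincide. Where you go beyond the paper is the non-generator case, and this is worth keeping: the paper's Case 2 multiplies by $k$ ``since $2\le j\le k+1$'', which tacitly assumes at most one generator $g$ per $j$ with $jg=a$. That holds when $a$ is a generator (then $j$ must be a unit and $g=j^{-1}a$), but for $e=\gcd(a,n)\ge 2$ the number of pairs $(g,j)$ can reach $k+1$. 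For instance, with $n=4$, $k=1$, $a=2$, $d=2$, the pairs $(g,j)=(1,2)$ and $(3,2)$ give the two sequences $1\bdot 2$ and $3\bdot 2$, so Case 2 alone contributes $2\bigl(X_{0,1}+X_{0,0}\bigr)=2$, exceeding the paper's Case 2 bound $k\bigl(X_{0,1}+X_{0,0}\bigr)=1$; the lemma's total bound survives only because the vacated Case 1 term $X_{1,1}=1$ absorbs the surplus. Your repair --- at most one $m$ per generator $g$; surplus beyond $k$ pairs occurs only when $e\mid k+1$ and then forces $m=k+1$, whose contribution $X_{0,d-1}+X_{0,d-2}$ vanishes for $d\ge 3$ and is absorbed by $X_{k,1}\ge 1$ for $d=2$ --- is correct, and both branches of the case analysis you sketch do go through (for $e\nmid k+1$ one has at most $k$ pairs outright). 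So the proposal is correct, matches the paper's strategy, and in fact supplies a step the printed proof needs but omits; I would only ask you to write out the final two-branch inequality explicitly rather than asserting it.
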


\begin{proof}
Fix $a\in C_n$. We estimate the expected number of zero-sum free sequences $$g \bdot \ldots \bdot g \bdot (x_1 g) \bdot \ldots \bdot (x_k g)$$ in $R$ containing $\{a\}$ with two cases separately: for a generator $g$, the first case is when $a=g$, and the second case is when $a=jg$ for $2\leq j\leq k+1$.

$\bullet$ \textbf{Case 1 ($a=g$):} The number of zero-sum free sequences over $C_n$ containing $a=g$ is
$ X_{k,d-1}.$ Hence, the expected number of zero-sum free sequences in $R$ containing $a=g$ is
\begin{equation}\label{eq:i=1(1)} X_{k,d-1}\cdot p^{d-1}.\end{equation}

$\bullet$ \textbf{Case 2 ($a=jg$ for $2\leq j\leq k+1$):}
We first estimate the number of zero-sum free sequences over $C_n$ containing $a=jg=x_\ell g$ for some $\ell$. Since $x'_\ell=x_\ell-1\geq 1$, the remaining $x'_1,\dots,x'_{\ell-1}, x'_{\ell+1}, \dots, x'_k$ satisfy $\sum_{\overset{1\leq i\leq k}{i\neq \ell}} x'_i\leq k-1$. If $x'_1,\dots,x'_{\ell-1}, x'_{\ell+1}, \dots, x'_k$ are different from $x'_\ell$, then the number of zero-sum free sequences over $C_n$ is at most
$ X_{k-1,d-2}.$ Otherwise, the number of zero-sum free sequences over $C_n$ is at most $X_{k-1,d-1}.$
Since $2\leq j\leq k+1$, the expected number of zero-sum free sequences of this case in $R$ is
\begin{equation}\label{eq:i=1(2)} k(X_{k-1,d-1}+X_{k-1,d-2}) p^{d-1}.\end{equation}

From~\eqref{eq:i=1(1)} and~\eqref{eq:i=1(2)}, we have that
\begin{equation*}\EE_{d,1}\leq \max_{\{a\}}\EE_{d,\{a\}}\leq p^{d-1}\(X_{k,d-1}+k(X_{k-1,d-1}+X_{k-1,d-2})\),
\end{equation*}
which completes our proof of the lemma.
\end{proof}

Next, we consider a general $i$ with $|A|=i$.

\begin{lemma}\label{lem:general_i}
For $1\leq i< d\leq \left\lfloor \frac{1+\sqrt{1+8k}}{2} \right\rfloor$, we have that
\begin{eqnarray*}\EE_{d,i}&\leq&  p^{d-i} \left[ i {k\choose i-1}+{k\choose i}\right]\(\sum_{j=0}^{i}X_{k-i+1,d-1-j}\) \mbox{ and }\\
\EE_{d,d}&=&1.
\end{eqnarray*}
\end{lemma}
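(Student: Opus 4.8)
The plan is to adapt the argument of Lemma~\ref{lem:i=1}, with the single element $a$ replaced by a set $A$ with $|A|=i$. First I would separate the randomness. If $S$ is a zero-sum free sequence of length $n-1-k$ with exactly $d$ distinct elements and $A\subseteq\supp S$, then (as $|A|=i$) precisely $d-i$ of the distinct elements of $S$ lie outside $A$, and conditionally on $A\subset R$ these are the only elements whose membership in $R$ is still in question. By linearity of expectation this gives $\EE_{d,A}=p^{d-i}M(A)$, where
\[
 M(A)=\#\{S: S \text{ zero-sum free},\ |S|=n-1-k,\ |\supp S|=d,\ A\subseteq\supp S\}
\]
is purely deterministic, so the task reduces to bounding $M(A)$ uniformly over $A$ with $|A|=i$, the case $i=d$ (where $\supp S=A$ is forced) being treated separately by a short direct computation from Theorem~\ref{thm:Gao}.

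Next I would feed in the structure theorem. By Theorem~\ref{thm:Gao}, after the shift $x_i'=x_i-1$ each $S$ counted by $M(A)$ is the datum of a generator $g$ of $C_n$ together with a partition $\lambda=(x_1'\le\dots\le x_k')$ with at most $k$ parts and $|\lambda|\le k$ whose number of distinct positive parts is $d-1$; the distinct elements of $S$ are exactly $g$ together with the elements $(m+1)g$, as $m$ ranges over the distinct positive parts of $\lambda$. In particular each $a\in A$ equals $cg$ for some $c\in\{1,\dots,k+1\}$, and at most one $a\in A$ can be $g$ itself. This produces the two cases in the bound. Either some $a\in A$ equals $g$: there are at most $i$ choices of which one, each choice pins $g$, and the remaining $i-1$ elements of $A$ force $i-1$ prescribed distinct positive parts of $\lambda$; or no $a\in A$ equals $g$, in which case all $i$ elements of $A$ force $i$ prescribed distinct positive parts. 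In both cases I would bound the number of admissible $\lambda$ by first charging the prescribed parts and then counting the residual partition: the prescribed distinct positive parts use up at least $1+2+\dots+(i-1)\ge i-1$ (resp.\ $1+\dots+i\ge i$) of the budget $k$, so the residual partition has at most $k-i+1$ boxes, and since deleting one copy of each prescribed part removes between $0$ and $i$ distinct parts, it has between $d-1-i$ and $d-1$ distinct positive parts; by~\eqref{eq:X1} and the monotonicity of $X_{b,c}$ in $b$, such residual partitions number at most $\sum_{j=0}^{i}X_{k-i+1,d-1-j}$. Multiplying by the (lossy) count $\binom{k}{i-1}$, resp.\ $\binom{k}{i}$, for placing the prescribed parts among the $k$ slots, by $i$ in the first case, then summing the two cases and multiplying by $p^{d-i}$, yields the stated inequality; the maximum over $A$ with $|A|=i$ then gives $\EE_{d,i}$.

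The step I expect to cause the most trouble is the case where no element of $A$ equals the generator. Unlike when some $a\in A$ equals $g$ — where $g$ is immediately determined and each remaining $a\in A$ fixes its part via $c_a=\log_g a-1$ — here $g$ is \emph{not} pinned down by $A$ alone, and for a given coefficient $c$ the equation $cg=a$ can have several solutions $g$ when $\gcd(c,n)>1$. So the genuine content is to show that, uniformly over $A$ with $|A|=i$, the number of pairs (compatible generator $g$, induced family of prescribed parts) is controlled by $\binom{k}{i}$ — morally, that the number of $i$-subsets of $\{1,\dots,k\}$ that actually arise as a "prescribed-part set," together with the number of $(g,\text{labeling})$ realizing each, is absorbed into that coefficient. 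This is where the restriction $0\le k\le\lfloor n/3\rfloor$ — which in particular keeps all relevant coefficients $c$ strictly below $n$ — and the rigidity of Theorem~\ref{thm:Gao} must enter. Everything else, namely the partition bookkeeping and the reduction to counts of the form $X_{k-i+1,\,\cdot}$, is routine and parallels Lemma~\ref{lem:i=1}.
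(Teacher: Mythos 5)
Your proposal follows the paper's proof essentially step for step: the same factoring out of $p^{d-i}$, the same split into the case where some element of $A$ equals the generator $g$ (giving the factor $i\binom{k}{i-1}$) versus the case where none does (giving $\binom{k}{i}$), and the same bound $\sum_{j=0}^{i}X_{k-i+1,d-1-j}$ on the residual partitions after the prescribed parts are charged. So on the level of strategy there is nothing to add.

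The step you single out as the real difficulty is indeed the one genuine issue, and you should know that the paper's own proof does not address it either: it asserts that for a \emph{fixed} tuple $2\le j_1<\dots<j_i\le k+1$ the number of sequences with $\{a_1,\dots,a_i\}=\{j_1g,\dots,j_ig\}$ is at most $\sum_j X_{k-i,d-1-j}$, which tacitly assumes at most one generator $g$ is compatible with a given coefficient tuple. That is false: already in $C_4$ with $k=1$ and $a=2$ one has $2=2\cdot 1=2\cdot 3$, so the single coefficient $j=2$ is realized by two generators, and the paper's Case 2 intermediate bound is violated (the final bound survives only because of slack elsewhere). The clean way to close the gap is the one you gesture at, but organized by generators rather than by coefficient sets: for each generator $g$ the coefficients $j_a\equiv ag^{-1}\pmod n$ are \emph{uniquely} determined by $A$ (this is where $k+1<n$, guaranteed by $k\le\lfloor n/3\rfloor$, is used), so the quantity to control is the number of admissible generators, i.e.\ $\#\{h\in(\Z/n\Z)^{\times}: a_1h\bmod n\in[2,k+1]\}$. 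Writing $e=\gcd(a_1,n)$, each value in the image occurs with multiplicity $\varphi(n)/\varphi(n/e)$ and the image meets $[2,k+1]$ in at most $\lfloor (k+1)/e\rfloor$ points, so the count is at most $e\lfloor(k+1)/e\rfloor\le k+1$ by the inequality $\varphi(n)\le e\,\varphi(n/e)$. This $O(k)$ bound on the number of admissible generators, combined with the generous binomial factors and the monotonicity $X_{k-i,c}\le X_{k-i+1,c}$, recovers the stated inequality; without some such argument neither your write-up nor the paper's is complete.

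Two smaller remarks. First, your heuristic that the restriction $k\le\lfloor n/3\rfloor$ resolves the multiplicity is not right: it only guarantees that the coefficient $j_a$ is a well-defined element of $\{2,\dots,k+1\}$, not that $g$ is unique. Second, the claim $\EE_{d,d}=1$, which you propose to dispatch by "a short direct computation," is not literally true: in the paper's own $C_8$ example both $1\bdot1\bdot1\bdot1\bdot3$ and $3\bdot3\bdot3\bdot3\bdot1$ have support $\{1,3\}$, so $\EE_{2,\{1,3\}}=2$. The correct statement is $\EE_{d,d}\le d=O_k(1)$ (the generator must be one of the $d$ support elements, and then the sequence is determined), which is all the application requires.
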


\begin{proof}
Fix $a_1,a_2, \dots, a_i\in C_n$. We estimate the number of zero-sum free sequences $$g \bdot \ldots \bdot g \bdot (x_1 g) \bdot \ldots \bdot (x_k g)$$ containing $\{a_1,a_2,\dots, a_i\}$ with two cases separately: for a generator $g$, the first case is when $g=a_\ell$ for some $\ell$, and the second case is when $g\neq a_\ell$ for all $\ell$.

$\bullet$ \textbf{Case 1 ($a_1=g$ and $\{a_2,\dots, a_i\}=\{j_2g,\dots, j_i g\}$ for $2\leq j_2<\dots<j_i\leq k+1$):} For fixed $g$ and $j_2,\dots, j_i$, the number of zero-sum free sequences over $C_n$ containing $\{a_1, a_2,\dots, a_i\}=\{g, j_2g,\dots, j_i g\}$
 is at most
$ X_{k-i+1,d-1}+\dots+X_{k-i+1,d-i}.$
The number of choices $(g,j_2,\dots,j_i)$ such that $g=a_\ell$ for some $\ell$ and $2\leq j_2<\dots<j_i\leq k+1$ is at most
$ i {k\choose i-1}.$  Hence, the expected number of zero-sum free sequences in $R$ containing $\{a_1, a_2,\dots, a_i\}=\{g, j_2g,\dots, j_i g\}$ is
\begin{equation}\label{eq:general_i(1)}i {k\choose i-1}(X_{k-i+1,d-1}+\dots+X_{k-i+1,d-i}) p^{d-i}.\end{equation}

$\bullet$ \textbf{Case 2 ($\{a_1, a_2,\dots, a_i\}=\{j_1g, j_2g,\dots, j_i g\}$ for $2\leq j_1< \dots <j_i\leq k+1$):}
For fixed $j_1<\dots<j_i$, we first consider the number of zero-sum free sequences over $C_n$ containing $\{a_1, a_2,\dots, a_i\}=\{j_1g, j_2g,\dots, j_i g\}$. Without loss of generality, we let $x_1=j_1,\dots, x_i=j_i$.
Since $x'_\ell=x_\ell-1\geq 1$, the remaining $x'_{i+1},\dots, x'_k$ satisfy $\sum_{i+1\leq \ell\leq k} x'_\ell\leq k-i$. The number of distinct $x'_{i+1},\dots x'_k$ from $x'_1,\dots, x'_i$ are possibly $d-1$, $d-2$,\dots, or $d-1-i$, and hence, the number of zero-sum free sequences over $C_n$ containing $\{a_1, a_2,\dots, a_i\}=\{j_1g, j_2g,\dots, j_i g\}$ is at most
$ X_{k-i,d-1}+X_{k-i,d-2}+\dots+X_{k-i,d-1-i}.$ 
From the choices of $2\leq j_1<\dots<j_i\leq k+1$,  the expected number of zero-sum free sequences in $R$ containing $\{a_1,\dots, a_i\}$ is at most
\begin{equation}\label{eq:general_i(2)}{k\choose i} (X_{k-i,d-1}+X_{k-i,d-2}+\dots+X_{k-i,d-1-i})p^{d-i}.\end{equation}

From~\eqref{eq:general_i(1)} and~\eqref{eq:general_i(2)}, we have that
\begin{eqnarray*}\EE_{d,i}&\leq& \max_{\{a_1,\dots,a_i\}}\EE_{d,\{a_1,\dots,a_i\}}\\ &\leq &
\left[ i {k\choose i-1}+{k\choose i}\right]\(\sum_{j=0}^{i}X_{k-i+1,d-1-j}\) p^{d-i} ,
\end{eqnarray*}
which completes our proof of the lemma.
\end{proof}

%%%%%%%%%%%%%

\subsection{Proofs of Theorems~\ref{thm:concentration2} and~\ref{thm:concentration}}

\begin{proof}[Proof of Theorem~\ref{thm:concentration2}]

Let $X=N^R_{n-1-k}$. Under the assumption that $k$ is fixed and $p\ll 1$, Corollary~\ref{coro:exp2} gives that $$\EE(X)=p\varphi(n)\(1+O_k(p)\).$$
Since $k$ is fixed, Lemmas~\ref{lem:i=1} and~\ref{lem:general_i} yield that
\begin{eqnarray*}
\EE_1&=&\EE_{1,1}+\EE_{2,1}+\dots+\EE_{D,1}=O_k(1), \\
\EE_2&=&\EE_{2,2}+\EE_{3,2}+\dots+\EE_{D,2}=O_k(1), \\
&\vdots&\\
\EE_D&=&\EE_{D,D}=1.
\end{eqnarray*}

Hence,
\begin{eqnarray*}\EE'&=&\max_{1\leq i\leq D}\{\EE_i\}=O_k(1) \hskip 0.5em
\mbox{and} \\
\EE^*&=&\max\{\EE',\EE\}=p\varphi(n),\end{eqnarray*}
provided that $p\varphi(n)\gg 1$, i.e.,  $p\gg \frac{\log\log n}{n}.$

Set $\lambda=d\log n$, then $e^{-\lambda}n^{d-1}=1/n=o(1),$ and hence, the Kim--Vu polynomial concentration result (Theorem~\ref{thm:Kim--Vu}) gives that a.a.s.
$$|X-\EE(X)|=O_k\(\sqrt{p\varphi(n)}(\log n)^d\),$$
that is,
$$X=p\varphi(n)+O_k\(p^2\varphi(n)+\sqrt{p\varphi(n)}(\log n)^d\). $$
Note that $p\varphi(n)\gg \sqrt{
p\varphi(n)}(\log n)^d$
is equivalent to $ p\gg \frac{(\log n)^{2d}\log\log n}{n},$ and hence, our assumption on $p$ is
$$\frac{(\log n)^{2d}\log\log n}{n}\ll p\ll 1.$$
Thus, we complete the proof of Theorem~\ref{thm:concentration2}.
\end{proof}

%%%%%%%%%%%

For the proof of Theorem~\ref{thm:concentration}, we use the following version of Chernoff's bound.

\begin{lemma}[\textbf{Chernoff's bound}, Corollary 4.6 in~\cite{MU2005}] \label{lem:Chernoff} Let $X_i$ be independent random variables such that $$\mbox{$\Pr[X_i=1]=p_i$\;\; and \;\; $\Pr[X_i=0]=1-p_i$,}$$ and let $X=\sum_{i=1}^{n} X_i$.
For $0<\lambda<1$,
\begin{equation*}\Pr \Big[|X-\EE(X)|\geq \lambda\EE(X)\Big]\leq 2 \exp\Big(-\frac{\lambda^2}{3}\EE(X)\Big). \end{equation*}
\end{lemma}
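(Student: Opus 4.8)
The plan is to prove this standard tail bound by the exponential-moment (Chernoff) method, handling the upper and lower deviations separately and then combining by a union bound. Write $\mu:=\EE(X)=\sum_{i=1}^n p_i$. The goal splits as
\[
\Pr\big[|X-\mu|\ge \lambda\mu\big]\le \Pr\big[X\ge(1+\lambda)\mu\big]+\Pr\big[X\le(1-\lambda)\mu\big].
\]
For the upper tail I would fix a parameter $t>0$ and apply Markov's inequality to the nonnegative variable $e^{tX}$, giving $\Pr[X\ge(1+\lambda)\mu]\le e^{-t(1+\lambda)\mu}\,\EE(e^{tX})$. The independence of the $X_i$ factorizes the moment generating function, $\EE(e^{tX})=\prod_{i=1}^n\EE(e^{tX_i})$, and since each $X_i$ is Bernoulli, $\EE(e^{tX_i})=1+p_i(e^t-1)\le \exp\!\big(p_i(e^t-1)\big)$ by $1+x\le e^x$. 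Multiplying the factors yields $\EE(e^{tX})\le \exp\!\big(\mu(e^t-1)\big)$, hence
\[
\Pr\big[X\ge(1+\lambda)\mu\big]\le \exp\!\Big(\mu\big(e^t-1-t(1+\lambda)\big)\Big).
\]

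Next I would optimize the exponent over $t$; it is minimized at $t=\log(1+\lambda)$, producing the clean bound $\exp\!\big(\mu(\lambda-(1+\lambda)\log(1+\lambda))\big)$. The lower tail is symmetric: applying Markov to $e^{-tX}$ with $t>0$, using the same factorization to get $\EE(e^{-tX})\le\exp(\mu(e^{-t}-1))$, and optimizing at $t=-\log(1-\lambda)$ (legitimate since $0<\lambda<1$), I obtain
\[
\Pr\big[X\le(1-\lambda)\mu\big]\le \exp\!\Big(\mu\big(-\lambda-(1-\lambda)\log(1-\lambda)\big)\Big).
\]

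It then remains to reduce both exponents to the stated form via the two analytic inequalities, valid for $0<\lambda<1$,
\[
(1+\lambda)\log(1+\lambda)-\lambda\ \ge\ \frac{\lambda^2}{3}
\quad\text{and}\quad
(1-\lambda)\log(1-\lambda)+\lambda\ \ge\ \frac{\lambda^2}{2}.
\]
These give an upper-tail bound $\exp(-\lambda^2\mu/3)$ and a stronger lower-tail bound $\exp(-\lambda^2\mu/2)\le\exp(-\lambda^2\mu/3)$, so the union bound delivers $\Pr[|X-\mu|\ge\lambda\mu]\le 2\exp(-\lambda^2\mu/3)$, as claimed. Both inequalities fall out cleanly from the power series $(1\pm\lambda)\log(1\pm\lambda)=\pm\lambda+\sum_{m\ge 2}\frac{(\mp1)^m}{m(m-1)}\lambda^m$: the lower-tail expansion has all positive coefficients, so dropping every term past $m=2$ already gives $\ge\lambda^2/2$, while the upper-tail expansion is alternating with terms decreasing in magnitude for $\lambda<1$, whence truncating after two terms gives $\ge \frac{\lambda^2}{2}-\frac{\lambda^3}{6}\ge\frac{\lambda^2}{3}$ for $\lambda\le1$.

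The only nonroutine point is verifying these two logarithmic inequalities; everything else is the mechanical moment-generating-function computation. A naive convexity attempt on $f(\lambda):=(1+\lambda)\log(1+\lambda)-\lambda-\lambda^2/3$ does not close the interval in one stroke, since $f''(\lambda)=\frac{1}{1+\lambda}-\frac23$ changes sign at $\lambda=\tfrac12$, so I would avoid that route and instead use the termwise power-series comparison above, which settles the full range $(0,1)$ directly. Since this is a textbook result (Corollary~4.6 of~\cite{MU2005}), I would present the factorization and optimization in full and either include the short series argument for the two inequalities or cite the reference for them.
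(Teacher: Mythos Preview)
Your argument is the standard exponential-moment derivation and is correct as written; the series comparison you give for the two logarithmic inequalities is clean and closes the range $0<\lambda<1$ without appeal to convexity. There is nothing to compare against, however: the paper does not prove this lemma at all but simply quotes it from the textbook reference (Corollary~4.6 in~\cite{MU2005}) and uses it as a black box in the proof of Theorem~\ref{thm:concentration}.
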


We are ready to prove Theorem~\ref{thm:concentration}.

\begin{proof}[Proof of Theorem~\ref{thm:concentration}]
Let $X=N^R_{n-1-k,d}$. First, we consider the case where $d=1$. Observe that
$X\sim Bin(\varphi(n),p)$, and hence, Chernoff's bound with $\lambda=\omega/\sqrt{p\varphi(n)}$ implies that a.a.s.
$$|X-p\varphi(n)|<\omega(p\varphi(n))^{1/2},$$
provided that $p\varphi(n)\gg 1$, i.e., $p\gg \frac{\log\log n}{n},$ where $\omega$ tends to $\infty$ arbitrarily slowly as $n\rightarrow \infty$.

Next we consider the case when $d\geq 2$.
It follows from~\eqref{lem:expectation_N^R} that $$\EE(X)=p^d\varphi(n)X_{k,d-1}.$$
Lemma~\ref{lem:general_i} gives that for a fixed $k$,
\begin{eqnarray*}
\EE_{d,i}&=& O_{k}(p^{d-i})\hskip 0.5em \mbox{for $1\leq i\leq d-1$} \hskip 0.5em \mbox{and}\\
\EE_{d,d}&=&1.
\end{eqnarray*}
Hence,
\begin{eqnarray*}\EE'&=&\max_{1\leq i\leq d} \EE_{d,i}=O_{k}(1) \hskip 0.5em
\mbox{and} \\
\EE^*&=&O_{k}(\max\{1,p^d\varphi(n)\})=O_{k}(p^d\varphi(n)),
\end{eqnarray*}
provided that $p^d\varphi(n)\gg 1$, i.e., $p\gg \(\frac{\log\log n}{n}\)^{1/d}.$

Set $\lambda=d\log n$, then $e^{-\lambda}n^{d-1}=1/n=o(1),$ and hence,
the Kim--Vu polynomial concentration result (Theorem~\ref{thm:Kim--Vu}) implies that a.a.s.
\begin{equation*}
|X-\EE(X)|\leq a_d(\EE'\EE^*)^{1/2}\lambda^d=O_k\(p^{d/2}\varphi(n)^{1/2}(\log n)^d\).
\end{equation*}
Note that $p^d\varphi(n)\gg \sqrt{
p^d\varphi(n)}(\log n)^d$
is equivalent to $ p\gg \frac{(\log n)^2(\log\log n)^{1/d}}{n^{1/d}},$ which is our assumption on $p$.
This completes our proof of Theorem~\ref{thm:concentration}.
\end{proof}

%%%%%%%%%%%%%

\begin{acknowledge}
 The authors thank Myungho Kim (Kyung Hee Univ.) for valuable comments.  
\end{acknowledge}

%%%%%%%%%%%%%%%

\providecommand{\bysame}{\leavevmode\hbox to3em{\hrulefill}\thinspace}
\providecommand{\MR}{\relax\ifhmode\unskip\space\fi MR }
% \MRhref is called by the amsart/book/proc definition of \MR.
\providecommand{\MRhref}[2]{%
  \href{http://www.ams.org/mathscinet-getitem?mr=#1}{#2}
}
\providecommand{\href}[2]{#2}
\def\MR#1{\relax}

%%%%%%%%%%%%%%%%%%%%%%%

\endgroup

\end{document}